\documentclass[11pt]{amsart}

\usepackage{amsmath,amssymb,amsthm,mathtools}
\usepackage{mathrsfs}
\usepackage[margin=1.12in]{geometry}
\usepackage{microtype}
\usepackage[colorlinks=true,linkcolor=blue,citecolor=blue,urlcolor=blue]{hyperref}
\hypersetup{
 unicode=true,
 pdftitle={Universal Plücker positivity and the Octopus inequality},
 pdfauthor={Haoran Zhu},
 pdfsubject={Real Schubert calculus, symmetric groups, and operator inequalities},
 pdfkeywords={Octopus inequality, universal Plücker coordinates, symmetric group,
 Young branching, positive semidefinite elements}
}

\newtheorem{theorem}{Theorem}[section]
\newtheorem{proposition}[theorem]{Proposition}
\newtheorem{lemma}[theorem]{Lemma}
\newtheorem{corollary}[theorem]{Corollary}
\theoremstyle{definition}

\theoremstyle{remark}
\newtheorem{remark}[theorem]{Remark}

\numberwithin{equation}{section}

\newcommand{\C}{\mathbb C}
\newcommand{\Sn}{\mathfrak S}
\newcommand{\one}{1}
\newcommand{\abs}[1]{\left\lvert #1\right\rvert}
\newcommand{\set}[1]{\left\{#1\right\}}
\newcommand{\Ind}{\operatorname{Ind}}
\newcommand{\Res}{\operatorname{Res}}
\newcommand{\tr}{\operatorname{tr}}
\newcommand{\pcoord}{\mathscr P}
\newcommand{\trans}{\mathcal T}
\newcommand{\proj}{\mathsf P}
\newcommand{\f}{f}

\title[Universal Pl\"ucker positivity and the Octopus inequality]
{Universal Pl\"ucker positivity\\and the Octopus inequality}

\author{Haoran Zhu}
\address{Division of Mathematical Sciences, Nanyang Technological University,
Singapore 637371}
\email{zhuh0031@e.ntu.edu.sg}

\date{27 August 2026}
\subjclass[2020]{Primary 20C30; Secondary 05E10, 14M15, 15B48}
\keywords{Octopus inequality, universal Pl\"ucker coordinates, real Schubert
calculus, symmetric group, Young's branching rule, positive semidefinite elements}

\begin{document}

\begin{abstract}
We prove a positivity theorem for the universal Pl\"ucker coordinates of
Karp and Purbhoo when exactly one parameter is negative, with a sharp uniform
threshold governed by the largest Plancherel up-transition probability.  At
the critical specialisation, positivity of the normalised $(2,2)$-coordinate
is equivalent to the Octopus inequality, the main technical tool in
the proof of Aldous's spectral gap conjecture.  By connecting the inequality
with Schubert calculus, this answers questions raised by Caputo and
Aldous.  For a Wronskian with distinct real zeros, we
determine the
maximal common half-line on which all branching-balanced coordinates are
positive semidefinite.  We also give Pl\"ucker-theoretic proofs of two
hypergraph inequalities of Alon, Kozma, and Puder.
\end{abstract}

\maketitle

\section{Introduction}

Universal Pl\"ucker positivity and the Octopus inequality arise in apparently
unrelated areas of mathematics; we establish a direct connection between them.

Karp and Purbhoo~\cite{KarpPurbhoo} introduced \textbf{universal Pl\"ucker coordinates} in their
solution of the inverse Wronski problem~\cite[Theorem~1.3(v), Proposition~1.12, and
Section~1.3]{KarpPurbhoo}.  The normalised Pl\"ucker coordinates
of points in a Wronski fibre occur as eigenvalues of their commuting
symmetric-group operators, whose positivity establishes several conjectures
in real Schubert calculus.

The \textbf{Octopus inequality} is the most technical tool in the proof of
Aldous's spectral gap conjecture by Caputo, Liggett, and
Richthammer~\cite{CLR}.  The conjecture asserts that the interchange process
and the random walk on every finite weighted graph have the same spectral
gap.  
Subsequent work has developed applications and extensions to exclusion processes, operator inequalities, and spectral
gaps on hypergraphs~\cite{AlonKozma,AlonKozmaPuder,Cesi,Chen}.  Despite these
developments, the available proofs gave little indication of why the
inequality should hold.  Caputo
~\cite[\emph{Perspective}]{AldousCaputoDurrettHolroydJungPuha2021}
and Aldous~\cite[\emph{Postscript}]{AldousCaputoDurrettHolroydJungPuha2021}
sought a more illuminating explanation, possibly through a connection with
another area of mathematics.

Interstingly, at the boundary $\sum_a u_a^{-1}=0$, the first Pl\"ucker relation gives the
connection: it factors the normalised $(2,2)$-coordinate as the product
of the left-hand side of the Octopus inequality and its image under the sign
automorphism.  The spectral theorem, together with the sign automorphism,
shows that positivity of the coordinate is equivalent to the Octopus
inequality.  Thus the latter is precisely the critical $(2,2)$ case of
universal Pl\"ucker positivity.  The same coordinate also gives
Pl\"ucker-theoretic proofs of two hypergraph generalisations~\cite{AlonKozmaPuder}.

Let $\one$ denote the identity of $\C[\Sn_n]$, and let $*$ be the antilinear
involution determined by $\sigma^*=\sigma^{-1}$.  For self-adjoint elements
$A,B\in\C[\Sn_n]$, write $A\succeq0$ if $\rho(A)$ is positive semidefinite
in every unitary representation $\rho$ of $\Sn_n$, and write $A\preceq B$
if $B-A\succeq0$.

For $\mathbf u=(u_1,\ldots,u_n)$, write
$\pcoord^\lambda(\mathbf u)\in\C[\Sn_n]$ for the universal coordinate (see a explict definition in Section~\ref{sec:universal}) indexed
by $\lambda$.  We determine a sharp uniform threshold for its positivity
when one parameter is negative.  The threshold is determined by the largest
Plancherel up-transition probability.

Let $\lambda\vdash k$, and write $\f^\lambda=\dim S^\lambda$.  If $\mu$ is
obtained from $\lambda$ by adding one box, write $\mu\gtrdot\lambda$.  The
Plancherel up-transition probability is given
by~\cite[Section~2.2]{Hora}:
\begin{equation}\label{eq:up-prob-intro}
 p^\uparrow(\lambda,\mu)
 =\frac{\f^\mu}{(k+1)\f^\lambda}.
\end{equation}
For a nonempty partition $\lambda$, write
\[
 \vartheta(\lambda)=\max_{\mu\gtrdot\lambda}
 p^\uparrow(\lambda,\mu).
\]
Every nonempty partition has at least two covers, so
$0<\vartheta(\lambda)<1$.  We say that $\lambda$ is
\textbf{branching-balanced} if
\[
 \vartheta(\lambda)\leqslant\frac12.
\]

\begin{theorem}\label{thm:one-negative}
Let $1\leqslant k\leqslant n$, let $\lambda\vdash k$, and let
$(x_i)_{1\leqslant i<n}$ be a family of positive real numbers in
non-increasing order.  If $a>0$ and
\[
 a\geqslant
 \frac{\vartheta(\lambda)}{1-\vartheta(\lambda)}
 \sum_{i=1}^{n-k}x_i,
\]
where the sum is zero when $k=n$, then
\[
 \pcoord^\lambda
 \bigl(x_1^{-1},\ldots,x_{n-1}^{-1},-a^{-1}\bigr)\succeq0.
\]
The coefficient $\vartheta(\lambda)/(1-\vartheta(\lambda))$ is sharp
uniformly in $n$ and the weights, already when $n=k+1$ and
$x_1=\cdots=x_k$.
\end{theorem}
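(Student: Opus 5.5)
The plan is to separate the dependence on the single negative parameter and reduce positivity to a comparison between two positive elements controlled by Young's branching rule. \emph{Step 1: linearity in the last parameter.} Using the explicit definition of $\pcoord^\lambda$ in Section~\ref{sec:universal}, I expect $\pcoord^\lambda(\mathbf u)$ to be affine in each $u_j^{-1}$ separately. I will verify this first. Granting it, I write
\[
 \pcoord^\lambda\bigl(x_1^{-1},\ldots,x_{n-1}^{-1},-a^{-1}\bigr)
 = a\,A-B ,
\]
up to a positive scalar normalisation. Here $A$ collects the terms not involving the $n$th parameter, and $B$ collects the terms linear in it. Both are self-adjoint elements of $\C[\Sn_n]$.

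\emph{Step 2: identify $A$ and $B$ through induction from $\Sn_{n-1}$.} Since $A$ only involves $x_1,\ldots,x_{n-1}$, I expect it to be obtained, by averaging over the positions of $n$, from the $\Sn_{n-1}$-coordinate $\pcoord^\lambda(x_1^{-1},\ldots,x_{n-1}^{-1})$. The all-positive case of Karp--Purbhoo gives $A\succeq0$. For $B$, I expect a similar description in which the index $n$ is forced into the support. This should produce a Jucys--Murphy-type factor together with the coordinates indexed by $\lambda$ on the remaining $n-1$ letters.

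The target inequality is $B\preceq aA$. The idea is to diagonalise both in a Gelfand--Tsetlin basis of each $S^\nu$, $\nu\vdash n$. On the summand of $\Res S^\nu$ labelled by the restricted shapes, the relevant part of $B$ should act through the isotypic projection onto $\mu\gtrdot\lambda$. Its weight relative to the corresponding part of $A$ should be $p^\uparrow(\lambda,\mu)/(1-p^\uparrow(\lambda,\mu))$. The remaining factor should be a weighted sum of at most $n-k$ of the $x_i$. Since the $x_i$ are non-increasing, that sum is bounded by $\sum_{i\leqslant n-k}x_i$. Maximising over $\mu$ then gives the coefficient $\vartheta/(1-\vartheta)$.

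\emph{Main obstacle.} The hard part is Step~2. The elements $A$ and $B$ need not commute, so a termwise eigenvalue comparison is not automatic. The first thing I would try is to write $B=C^*DC$ and $A\succeq C^*D'C$ for a common $C$, with $D,D'$ diagonal in the Young basis and $D\preceq aD'$. This is the operator form of ``the branching probability bounds the transfer''. I would obtain it from the Cauchy--Schwarz/Schur-complement inequality for block operators $\bigl(\begin{smallmatrix}aA & \ast\\ \ast & \ast\end{smallmatrix}\bigr)$ coming from the Pl\"ucker relation with one variable added. If that fails, I would fall back on induction on $n-k$, peeling off $x_{n-k}$ at each step.

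\emph{Sharpness.} Take $n=k+1$ and $x_1=\cdots=x_k=x$. Then everything commutes with $\Sn_k\times\Sn_1$, and the coordinate is computable on each $S^\mu$ with $\mu\gtrdot\lambda$. On $S^\mu$, I expect it to act as a scalar proportional to $a(1-p^\uparrow(\lambda,\mu))-x\,p^\uparrow(\lambda,\mu)$. The scalar is negative for $\mu$ attaining $\vartheta(\lambda)$ as soon as $a<\tfrac{\vartheta}{1-\vartheta}x$. This shows the constant cannot be lowered, independently of $n$ and of the weights.
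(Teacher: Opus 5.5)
\textbf{There is a genuine gap in the positivity argument.} Step~2, which you yourself identify as the hard part, is never carried out, and the Gelfand--Tsetlin route cannot work as stated.

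Write $x^S=\prod_{i\in S}x_i$. The normalised coordinate is $-\frac{1}{u_1\cdots u_n}\pcoord^\lambda(\mathbf u)=\frac{k!}{\f^\lambda}(aA-B)$. Here $A=\sum_{B'}x^{B'}\proj^\lambda_{B'\cup\set{n}}$ over $(k-1)$-subsets $B'\subseteq[n-1]$, and $B=\sum_{A'}x^{A'}\proj^\lambda_{A'}$ over $k$-subsets $A'\subseteq[n-1]$.

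\begin{itemize}
\item Your Step~2 descriptions do not match your Step~1 labels. $B$ is literally the embedded $\Sn_{n-1}$-coordinate, and it is $A$ that has $n$ forced into the support.
\item Both $A$ and $B$ are trivially $\succeq0$, being nonnegative combinations of projections. So the all-positive case of Karp--Purbhoo contributes nothing towards $B\preceq aA$.
\item $A$ and $B$ do not commute in general. Also, $\proj^\lambda_{A'}$ with $A'\neq[k]$ is in general not diagonal in a Gelfand--Tsetlin basis of $S^\nu$. So there is no common basis in which a ratio $p^\uparrow/(1-p^\uparrow)$ could be read off.
\item The $C^*DC$ factorisation, the Schur-complement bound from an unspecified Pl\"ucker relation, and the induction on $n-k$ are only named. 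Nothing indicates that any of them closes.
\end{itemize}

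\textbf{The missing idea is to localise to $(k+1)$-element sets,} where the noncommutativity disappears. For a $k$-subset $A'\subseteq[n-1]$, put $Y=A'\cup\set{n}$.

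The element $\sum_{q\in Y}\proj^\lambda_{Y\setminus\set{q}}$ is central in $\C[\Sn_Y]$. A trace computation with the branching rule and Schur's lemma shows it acts on $S^\mu$ by $p^\uparrow(\lambda,\mu)^{-1}$ if $\mu\gtrdot\lambda$, and by $0$ otherwise (Proposition~\ref{prop:branching-projection}). Hence it commutes with $\proj^\lambda_{A'}$.

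Put $\vartheta=\vartheta(\lambda)$ and $L_{A'}=\sum_{r\in A'}\proj^\lambda_{(A'\setminus\set{r})\cup\set{n}}-\frac{1-\vartheta}{\vartheta}\proj^\lambda_{A'}$. On $S^\mu$, $L_{A'}$ has eigenvalues $p^\uparrow(\lambda,\mu)^{-1}-\vartheta^{-1}\geqslant0$ and $p^\uparrow(\lambda,\mu)^{-1}$, so $L_{A'}\succeq0$ (Proposition~\ref{prop:local-criterion}). Reindexing by $B'=A'\setminus\set{r}$ gives
\[
 aA-B=\frac{\vartheta}{1-\vartheta}\sum_{A'}x^{A'}L_{A'}
 +\sum_{B'}x^{B'}\Bigl(a-\frac{\vartheta}{1-\vartheta}\sum_{r\in[n-1]\setminus B'}x_r\Bigr)\proj^\lambda_{B'\cup\set{n}}.
\]
The last coefficients are nonnegative because $[n-1]\setminus B'$ has $n-k$ elements and the $x_i$ are non-increasing. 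This is where, as you correctly anticipated, the bound by $\sum_{i\leqslant n-k}x_i$ enters.

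\textbf{On sharpness.} Your argument is this same local computation with $n=k+1$, and its conclusion is right. However, the operator is not a scalar on $S^\mu$:
\begin{itemize}
\item on the range of $\proj^\lambda_{[k]}$ inside $S^\mu$ (dimension $\f^\lambda$), it is a positive multiple of $a(1-p^\uparrow)-xp^\uparrow$;
\item on the complement, it is a positive multiple of $a$.
\end{itemize}
Moreover, the fact that the $k+1$ conjugate projections sum to $p^\uparrow(\lambda,\mu)^{-1}$ on $S^\mu$ has to be proved rather than expected.
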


In particular, if $\lambda$ is branching-balanced and
$u_1,\ldots,u_{n-1}>0>u_n$, then
$\pcoord^\lambda(\mathbf u)\succeq0$ whenever
$\sum_{i=1}^n u_i^{-1}\leqslant0$.
This result also has a consequence for Schubert calculus.  Let
$g(t)=\prod_a(t-r_a)$, where $r_1<\cdots<r_n$, and let $\xi$ be the largest
zero of $g'$.  We prove that
$\pcoord^\lambda(t-r_1,\ldots,t-r_n)\succeq0$ for every $t\geqslant\xi$
and every nonempty branching-balanced $\lambda$ with $\abs\lambda\leqslant n$.
The left endpoint is sharp uniformly in $\lambda$.

At the critical specialisation, positivity of the $(2,2)$-coordinate is
equivalent to the following inequality.

\begin{theorem}[Octopus inequality]\label{thm:octopus}
Let $n\geqslant2$ and let $x_1,\ldots,x_{n-1}$ be nonnegative real
numbers.  Then
\[
 \left(\sum_{i<n}x_i\right)
 \sum_{i<n}x_i\bigl(\one-(i\,n)\bigr)
 -\sum_{i<j<n}x_ix_j\bigl(\one-(i\,j)\bigr)\succeq0.
\]
\end{theorem}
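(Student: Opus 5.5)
Write $s=\sum_{i<n}x_i$ and let $O(\mathbf x)$ denote the left-hand side of Theorem~\ref{thm:octopus}. The plan is to deduce this from Theorem~\ref{thm:one-negative} with $\lambda=(2,2)$, $k=4$, through the factorisation at the critical boundary described in the introduction.

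First, the reductions. By continuity in the $x_i$ (the positive semidefinite cone is closed) and since dropping variables with $x_i=0$ only removes terms, we may assume all $x_i>0$. We may also assume $n\geqslant4$, because the cases $n\leqslant3$ are a direct check in the few irreducible representations involved. Relabelling indices by a permutation of $\{1,\ldots,n-1\}$ is an inner automorphism of $\C[\Sn_n]$, so it preserves $\succeq0$; hence we may take $x_1\geqslant\cdots\geqslant x_{n-1}$, as Theorem~\ref{thm:one-negative} requires.

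Second, the threshold. The covers of $(2,2)$ are $(3,2)$ and $(2,2,1)$, each with $f=5$, and $f^{(2,2)}=2$. So $\vartheta((2,2))=5/10=1/2$: the shape is branching-balanced and the coefficient $\vartheta/(1-\vartheta)$ equals $1$. Since the $x_i$ are non-increasing, $\sum_{i\leqslant n-4}x_i\leqslant s$. Theorem~\ref{thm:one-negative} with $a=s$ therefore gives
\[
\pcoord^{(2,2)}\bigl(x_1^{-1},\ldots,x_{n-1}^{-1},-s^{-1}\bigr)\succeq0 .
\]
This is exactly the critical specialisation $\sum_a u_a^{-1}=0$.

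Third, the factorisation, which is the main obstacle. Using the explicit definition of $\pcoord^\lambda$ in Section~\ref{sec:universal} and the first Plücker relation, we must show that at $\sum u_a^{-1}=0$ the normalised $(2,2)$-coordinate equals a positive scalar multiple of $O(\mathbf x)\,\varepsilon(O(\mathbf x))$. Here $\varepsilon$ is the sign automorphism $\sigma\mapsto\operatorname{sgn}(\sigma)\sigma$. The two factors commute, because the coordinates generate a commutative algebra. To establish this, expand the coordinate as a polynomial in the Jucys--Murphy-type sums and substitute $u_n^{-1}=-\sum_{i<n}u_i^{-1}$. Then use the relation to rewrite the $\binom{n}{2}$ transposition terms: transpositions $(i\,n)$ carry weight $x_i s$, and transpositions $(i\,j)$ with $i,j<n$ carry weight $x_ix_j$. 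Finally, check that both sides agree, after which the identity is a polynomial identity in $\C[\Sn_n]$.

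Finally, deduce positivity of $O$ from positivity of the product. Fix an irreducible representation $S^\mu$. The operators $\rho(O)$ and $\rho(\varepsilon O)$ are commuting self-adjoint operators, so they are simultaneously diagonalisable, and their products of eigenvalues are $\geqslant0$. We need to exclude an eigenvector on which $O<0$ and $\varepsilon O<0$ simultaneously. To do this, note that $O+\varepsilon(O)=2\bigl(s\sum_i x_i-\sum_{i<j}x_ix_j\bigr)\one=\bigl(s^2+\sum_i x_i^2\bigr)\one$, a positive scalar, because the transposition terms cancel under $\varepsilon$. Two commuting operators whose sum is positive cannot be simultaneously negative on a common eigenvector. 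Hence on each joint eigenvector at least one of $O$, $\varepsilon O$ is positive, and then the product being $\geqslant0$ forces the other to be $\geqslant0$ too. Therefore $O\succeq0$.
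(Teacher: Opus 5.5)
Your proposal follows the paper's route: positivity of the critical $(2,2)$-coordinate from Theorem~\ref{thm:one-negative} with $\vartheta((2,2))=\tfrac12$, then the factorisation $-\pcoord^{(2,2)}(\mathbf u)/(u_1\cdots u_n)=O\,\varepsilon(O)=\omega^2\one-\trans^2$, then splitting the sign using $O+\varepsilon(O)=2\omega\one$ (the paper phrases this last step via the spectral theorem for $\trans$, which is equivalent). Your third step is left only as a plan, but it needs no Jucys--Murphy expansion: at $\sum_a u_a^{-1}=0$ one has $\pcoord^{(1)}=(u_1\cdots u_n)\bigl(\sum_a u_a^{-1}\bigr)\one=0$, so the middle term of $-\pcoord^\varnothing\pcoord^{(2,2)}+\pcoord^{(1)}\pcoord^{(2,1)}-\pcoord^{(1,1)}\pcoord^{(2)}=0$ drops out, while a one-line computation from the definition gives $\pcoord^{(1,1)}=-(u_1\cdots u_n)\,O$ and $\pcoord^{(2)}=-(u_1\cdots u_n)\,\varepsilon(O)$, which yields the factorisation with scalar exactly $1$ and also covers $n\leqslant3$ uniformly, since there $\pcoord^{(2,2)}=0$.
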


The same coordinate gives Pl\"ucker-theoretic proofs of two hypergraph
inequalities of Alon, Kozma, and Puder
\cite[Theorems~1.7 and~1.9]{AlonKozmaPuder}.  Their remaining generalised
Octopus inequality, for sets with large
intersection~\cite[Theorem~1.8]{AlonKozmaPuder}, is discussed in the final
remark.

\section{Branching projections}\label{sec:branching}

For every finite group $G$, we use the analogous involution on $\C[G]$,
given by the antilinear extension of $g^*=g^{-1}$, and the same order on
self-adjoint elements.  It is enough to look at the left regular representation.
If $H\leqslant G$, then the standard inclusion
$\C[H]\hookrightarrow\C[G]$ preserves positivity.  The order is also closed
under nonnegative linear combinations.

Let $X$ be a finite set of cardinality $k$, and let $\Sn_X$ be the symmetric
group on $X$.  For $\lambda\vdash k$, let $\chi^\lambda$ be the irreducible
character of $S^\lambda$ and let $\f^\lambda=\dim S^\lambda$.  The standard
character formula~\cite[Section~3.4]{FultonHarris} gives the central primitive
idempotent corresponding to $S^\lambda$:
\begin{equation}\label{eq:central-idempotent}
 \proj_X^\lambda
 =\frac{\f^\lambda}{k!}
  \sum_{\sigma\in\Sn_X}\chi^\lambda(\sigma^{-1})\sigma
 \in\C[\Sn_X].
\end{equation}
Characters of symmetric groups are real and constant on cycle type, so
$\chi^\lambda(\sigma^{-1})=\chi^\lambda(\sigma)$.  In any unitary
representation of a symmetric group containing $\Sn_X$, the element
$\proj_X^\lambda$ acts as the orthogonal projection onto the
$S^\lambda$-isotypic component of the restriction to $\Sn_X$.

We use the branching rule~\cite[Section~2.8]{Sagan} in the forms

\noindent
\begin{minipage}[t]{0.49\textwidth}
\begin{equation}\label{eq:ind-branching}
 \Ind_{\Sn_k}^{\Sn_{k+1}}S^\lambda
 \cong\bigoplus_{\mu\gtrdot\lambda}S^\mu.
\end{equation}
\end{minipage}\hfill
\begin{minipage}[t]{0.49\textwidth}
\begin{equation}\label{eq:res-branching}
 \Res_{\Sn_k}^{\Sn_{k+1}}S^\mu
 \cong\bigoplus_{\lambda\lessdot\mu}S^\lambda.
\end{equation}
\end{minipage}

Both decompositions are multiplicity-free.  Taking dimensions in
\eqref{eq:ind-branching} gives
\begin{equation}\label{eq:dimension-branching}
 \sum_{\mu\gtrdot\lambda}\f^\mu
 =(k+1)\f^\lambda.
\end{equation}
This verifies that \eqref{eq:up-prob-intro} is a probability distribution.

We first need the following branching identity.

\begin{proposition}\label{prop:branching-projection}
Let $Y$ be a set of cardinality $k+1$, and let $\lambda\vdash k$.  Then, in
$\C[\Sn_Y]$,
\[
 \sum_{q\in Y}\proj_{Y\setminus\set{q}}^{\lambda}
 =\sum_{\mu\gtrdot\lambda}
   \frac{(k+1)\f^\lambda}{\f^\mu}\,\proj_Y^\mu
 =\sum_{\mu\gtrdot\lambda}
   p^\uparrow(\lambda,\mu)^{-1}\proj_Y^\mu.
\]
\end{proposition}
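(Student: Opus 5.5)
The plan is to show that the left-hand side $L=\sum_{q\in Y}\proj_{Y\setminus\set{q}}^{\lambda}$ is central in $\C[\Sn_Y]$, and then to determine the scalar by which it acts on each irreducible $S^\mu$. First, centrality. For $\tau\in\Sn_Y$ we have $\tau\,\Sn_{Y\setminus\set q}\,\tau^{-1}=\Sn_{Y\setminus\set{\tau(q)}}$. Since characters are class functions, formula \eqref{eq:central-idempotent} gives
\[
 \tau\proj^\lambda_{Y\setminus\set q}\tau^{-1}=\proj^\lambda_{Y\setminus\set{\tau(q)}}.
\]
Summing over $q$ therefore permutes the terms, so $L$ commutes with every $\tau$. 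Hence $L$ is central, and $L=\sum_{\mu\vdash k+1}c_\mu\proj_Y^\mu$, where $c_\mu$ is the scalar by which $L$ acts on $S^\mu$.

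Next, compute $c_\mu$ by taking the trace on $S^\mu$, so that $c_\mu\f^\mu=\tr_{S^\mu}(L)$. All the conjugates above have the same trace on $S^\mu$, so $\tr_{S^\mu}(L)=(k+1)\,\tr_{S^\mu}\bigl(\proj^\lambda_{Y\setminus\set q}\bigr)$ for any fixed $q$. The element $\proj^\lambda_{Y\setminus\set q}$ acts on $S^\mu$ as the orthogonal projection onto the $S^\lambda$-isotypic part of $\Res S^\mu$. By the multiplicity-free branching rule \eqref{eq:res-branching}, this part has dimension $\f^\lambda$ if $\lambda\lessdot\mu$ and is zero otherwise. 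Thus $c_\mu=(k+1)\f^\lambda/\f^\mu$ when $\mu\gtrdot\lambda$, and $c_\mu=0$ otherwise, which is the first equality. The second equality is just \eqref{eq:up-prob-intro}.

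There is no serious obstacle. The only points needing care are that $\proj^\lambda_X$, viewed inside $\C[\Sn_Y]$, acts on a $\Sn_Y$-module as the isotypic projection for the restriction; the paper notes this after \eqref{eq:central-idempotent}. The other point is the conjugation identity, which follows because $\tau$ transports $\Sn_{Y\setminus\set q}$ isomorphically and preserves cycle types, hence preserves $\chi^\lambda$.
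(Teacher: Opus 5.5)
Your proposal is correct and follows essentially the same route as the paper's proof. Both arguments show that conjugation permutes the summands, so the sum is central, and then read off the scalar on each $S^\mu$ by computing the trace via the multiplicity-free restriction rule and dividing by $\f^\mu$.
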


\begin{proof}
Conjugation by $g\in\Sn_Y$ sends the summand indexed by $q$ to the summand
indexed by $g(q)$.  Hence the left-hand side is central in $\C[\Sn_Y]$.

Let $\mu\vdash k+1$.  By \eqref{eq:res-branching}, the trace of
$\proj_{Y\setminus\set{q}}^{\lambda}$ on $S^\mu$ is $\f^\lambda$ if
$\mu\gtrdot\lambda$, and is zero otherwise.  The trace is independent of
$q$, because the subgroups $\Sn_{Y\setminus\set{q}}$ are conjugate.  Thus
\[
 \sum_{q\in Y}\tr_{S^\mu}
   \bigl(\proj_{Y\setminus\set{q}}^{\lambda}\bigr)
 =\begin{cases}
   (k+1)\f^\lambda,&\mu\gtrdot\lambda,\\
   0,&\text{otherwise}.
  \end{cases}
\]
Schur's lemma~\cite[Theorem~1.6.5]{Sagan} now shows that the left-hand side
acts on $S^\mu$ by a scalar.  Dividing the displayed trace by $\f^\mu$
gives the first equality; the second follows from
\eqref{eq:up-prob-intro}.
\end{proof}

The next result gives the complete local spectrum.

\begin{proposition}\label{prop:local-criterion}
Let $\lambda\vdash k$ be nonempty, let $Y=A\sqcup\set{y}$, where
$\abs A=k$, and let $c\geqslant0$.  If $\mu\vdash k+1$ and
$\mu\gtrdot\lambda$, then, on $S^\mu$, the operator
\[
 \sum_{r\in A}
 \proj_{(A\setminus\set{r})\cup\set{y}}^\lambda
 -c\proj_A^\lambda
\]
has eigenvalues $p^\uparrow(\lambda,\mu)^{-1}-(1+c)$ and
$p^\uparrow(\lambda,\mu)^{-1}$, with multiplicities $\f^\lambda$ and
$\f^\mu-\f^\lambda$, respectively.  If $\mu\vdash k+1$ and
$\mu\not\gtrdot\lambda$, it vanishes on $S^\mu$.  Consequently, it is positive
semidefinite if and only if
$\vartheta(\lambda)\leqslant(1+c)^{-1}$.
\end{proposition}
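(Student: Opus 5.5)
The idea is to rewrite the sum over $r\in A$ as a full sum over $Y$ and then apply Proposition~\ref{prop:branching-projection}. The sets $(A\setminus\set{r})\cup\set{y}=Y\setminus\set{r}$ for $r\in A$ are exactly the $k$-subsets of $Y$ other than $A=Y\setminus\set{y}$. Hence
\[
 \sum_{r\in A}\proj_{(A\setminus\set{r})\cup\set{y}}^\lambda
 -c\proj_A^\lambda
 =\sum_{q\in Y}\proj_{Y\setminus\set{q}}^\lambda-(1+c)\proj_A^\lambda
 =\sum_{\mu\gtrdot\lambda}p^\uparrow(\lambda,\mu)^{-1}\proj_Y^\mu-(1+c)\proj_A^\lambda,
\]
where the last step is Proposition~\ref{prop:branching-projection}.

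Now fix $\mu\vdash k+1$ and restrict to the irreducible $S^\mu$.

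\textbf{Case $\mu\gtrdot\lambda$.} The central term acts on $S^\mu$ as the scalar $p^\uparrow(\lambda,\mu)^{-1}$. The element $\proj_A^\lambda$ acts as the orthogonal projection onto the $S^\lambda$-isotypic part of $\Res_{\Sn_A}S^\mu$. By the multiplicity-free branching rule \eqref{eq:res-branching}, this part is a single copy of $S^\lambda$, of dimension $\f^\lambda$. The operator is therefore a scalar minus $(1+c)$ times a projection of rank $\f^\lambda$. Its eigenvalues are $p^\uparrow(\lambda,\mu)^{-1}-(1+c)$ with multiplicity $\f^\lambda$, and $p^\uparrow(\lambda,\mu)^{-1}$ with multiplicity $\f^\mu-\f^\lambda$.

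\textbf{Case $\mu\not\gtrdot\lambda$.} The central term vanishes on $S^\mu$. By \eqref{eq:res-branching}, $S^\lambda$ does not occur in $\Res_{\Sn_A}S^\mu$, so $\proj_A^\lambda$ vanishes on $S^\mu$ as well. Hence the whole operator is zero there.

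For the criterion, recall from Section~\ref{sec:branching} that positivity may be checked in the left regular representation of $\Sn_Y$, which is a direct sum of the irreducibles $S^\mu$. The operator preserves each $S^\mu$ (it lies in $\C[\Sn_Y]$), so it is positive semidefinite exactly when every eigenvalue listed above is nonnegative. The eigenvalues $p^\uparrow(\lambda,\mu)^{-1}$ and $0$ are automatically nonnegative.

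The remaining eigenvalues $p^\uparrow(\lambda,\mu)^{-1}-(1+c)$ genuinely occur for each $\mu\gtrdot\lambda$, since $\f^\lambda\geqslant1$. They are nonnegative precisely when $p^\uparrow(\lambda,\mu)\leqslant(1+c)^{-1}$ for every cover $\mu$, which is the condition $\vartheta(\lambda)\leqslant(1+c)^{-1}$.

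The only point needing care is the first rewriting step. One must check that each $k$-subset of $Y$ other than $A$ is $Y\setminus\set{r}$ for exactly one $r\in A$, so the sum over $r\in A$ is exactly the sum over $q\in Y$ minus the term $q=y$. After that, the argument is just the multiplicity-freeness of the branching rule.
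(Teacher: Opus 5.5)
Your proof is correct and follows the paper's argument. You rewrite the operator as $\sum_{q\in Y}\proj^\lambda_{Y\setminus\set{q}}-(1+c)\proj_A^\lambda$ (a step the paper leaves implicit), apply Proposition~\ref{prop:branching-projection}, and use multiplicity-free restriction to see that $\proj_A^\lambda$ is a rank-$\f^\lambda$ projection on $S^\mu$ (zero when $\mu\not\gtrdot\lambda$); the paper expresses the same computation through the commuting orthogonal projections $\proj_Y^\mu\proj_A^\lambda$ and $\proj_Y^\mu-\proj_Y^\mu\proj_A^\lambda$.
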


\begin{proof}
By Proposition~\ref{prop:branching-projection}, the operator is
\[
 \sum_{\mu\gtrdot\lambda}
 \left[
 p^\uparrow(\lambda,\mu)^{-1}
   \bigl(\proj_Y^\mu-\proj_Y^\mu\proj_A^\lambda\bigr)
 +\bigl(p^\uparrow(\lambda,\mu)^{-1}-(1+c)\bigr)
   \proj_Y^\mu\proj_A^\lambda
 \right].
\]
Since $\proj_Y^\mu$ is central, it commutes with $\proj_A^\lambda$, so their
product is an orthogonal projection.  By the multiplicity-free branching
rule, this projection has rank $\f^\lambda$ on $S^\mu$ when
$\mu\gtrdot\lambda$, and vanishes otherwise.  The assertions follow.
\end{proof}

We now apply the criterion to square partitions.

\begin{corollary}\label{cor:square-balanced}
Let $r\geqslant1$.  Then the square partition $(r^r)$ is
branching-balanced.
\end{corollary}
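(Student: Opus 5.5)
The plan is to compute $\vartheta((r^r))$ exactly, using only the dimension identity \eqref{eq:dimension-branching} and the symmetry of $\f^\mu$ under conjugation. I expect $\vartheta((r^r))=\tfrac12$, which gives branching-balance, with equality. Equivalently, by Proposition~\ref{prop:local-criterion} with $c=1$, the associated local operator is then positive semidefinite. So I will establish the numerical statement and then note this reformulation.

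\textbf{Step 1: the covers.} Let $\lambda=(r^r)\vdash k=r^2$. A box can be added to a Young diagram only at an outer corner. The square has exactly two: the end of the first row and the bottom of the first column. Hence $\lambda$ has exactly two covers,
\[
 \mu_1=(r+1,r^{r-1}),\qquad \mu_2=(r^r,1).
\]
For $r=1$ these are $(2)$ and $(1,1)$.

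\textbf{Step 2: the two covers have equal dimension.} The partition $\mu_2$ is the conjugate of $\mu_1$, because transposing the square with a box added to its first row gives the square with a box added to its first column. Dimensions are invariant under conjugation, $\f^{\mu'}=\f^{\mu}$. This can be seen in either of two ways:
\begin{itemize}
\item $S^{\mu'}\cong S^\mu\otimes\operatorname{sgn}$;
\item transposing standard Young tableaux is a bijection, or the hook-length formula is symmetric under conjugation.
\end{itemize}
Hence $\f^{\mu_1}=\f^{\mu_2}$.

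\textbf{Step 3: conclusion.} By \eqref{eq:dimension-branching}, $\f^{\mu_1}+\f^{\mu_2}=(k+1)\f^\lambda$. Combined with Step 2, this gives
\[
 p^\uparrow(\lambda,\mu_1)=p^\uparrow(\lambda,\mu_2)=\tfrac12 .
\]
Therefore $\vartheta((r^r))=\tfrac12$, so $(r^r)$ is branching-balanced, with equality.

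There is no real obstacle in this argument. The only point needing care is confirming that the square has no further covers, so that the two equal probabilities must sum to $1$. This follows directly from the corner description in Step 1.
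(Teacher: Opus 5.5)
Your proof is correct and follows the paper's argument: the two covers $(r+1,r^{r-1})$ and $(r^r,1)$ are conjugate, so they have equal dimension, and \eqref{eq:dimension-branching} then makes both up-transition probabilities equal to $\tfrac12$. You also check explicitly that the square has exactly two addable corners, which the paper leaves implicit.
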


\begin{proof}
The two covers $(r+1,r^{r-1})$ and $(r^r,1)$ are conjugate; here
$r^{r-1}$ is omitted when $r=1$.  Their Specht modules therefore have the
same dimension.  By \eqref{eq:dimension-branching},
\[
 \f^{(r+1,r^{r-1})}=\f^{(r^r,1)}
 =\frac{r^2+1}{2}\f^{(r^r)},
\]
and both up-transition probabilities are $1/2$.
\end{proof}


\section{Universal Pl\"ucker positivity}\label{sec:universal}

Let $[n]=\set{1,\ldots,n}$.  If $X\subseteq[n]$, we regard $\Sn_X$ as the
subgroup of $\Sn_n$ fixing the complement of $X$.  For
$\mathbf u=(u_1,\ldots,u_n)\in\C^n$ and $\lambda\vdash k$, the universal
Pl\"ucker coordinate is
\begin{equation}\label{eq:universal-coordinate}
 \pcoord^\lambda(\mathbf u)
 =\sum_{\substack{X\subseteq[n]\\\abs X=k}}
   \left(\sum_{\sigma\in\Sn_X}\chi^\lambda(\sigma)\sigma\right)
   \prod_{a\in[n]\setminus X}u_a
 \in\C[\Sn_n].
\end{equation}
We put $\pcoord^\lambda(\mathbf u)=0$ when $k>n$.  For the empty
partition,
\begin{equation}\label{eq:P-empty}
 \pcoord^\varnothing(\mathbf u)=(u_1\cdots u_n)\one.
\end{equation}
These are the coordinates of Karp and Purbhoo with $t=0$ in their
notation~\cite[Equation~(1.2)]{KarpPurbhoo}.  Their universal Pl\"ucker
theorem shows that the elements $\pcoord^\lambda(\mathbf u)$ commute and
satisfy the quadratic Pl\"ucker relations
\cite[Theorem~1.3(i) and (iii)]{KarpPurbhoo}.

Let $x_1,\ldots,x_{n-1},a>0$, and put
\[
 u_i=x_i^{-1}\quad(i<n),
 \qquad u_n=-a^{-1}.
\]
Using \eqref{eq:central-idempotent} in
\eqref{eq:universal-coordinate}, and separating the subsets which contain
$n$, gives
\begin{equation}\label{eq:signed-projections}
\begin{aligned}
 -\frac{1}{u_1\cdots u_n}\pcoord^\lambda(\mathbf u)
 =\frac{k!}{\f^\lambda}\Biggl(&
 a\sum_{\substack{B\subseteq[n-1]\\\abs B=k-1}}
    \left(\prod_{i\in B}x_i\right)
    \proj_{B\cup\set{n}}^\lambda-\sum_{\substack{A\subseteq[n-1]\\\abs A=k}}
    \left(\prod_{i\in A}x_i\right)\proj_A^\lambda
 \Biggr).
\end{aligned}
\end{equation}
An empty sum is understood to be zero.

\begin{proof}[Proof of Theorem~\ref{thm:one-negative}]
The expression in parentheses in \eqref{eq:signed-projections} is equal to
\[
\begin{aligned}
 &\frac{\vartheta(\lambda)}{1-\vartheta(\lambda)}
 \sum_{\substack{A\subseteq[n-1]\\\abs A=k}}
 \left(\prod_{i\in A}x_i\right)
 \left(
  \sum_{r\in A}
   \proj_{(A\setminus\set{r})\cup\set{n}}^\lambda
  -\frac{1-\vartheta(\lambda)}{\vartheta(\lambda)}
   \proj_A^\lambda
 \right)\\
 &\quad+
 \sum_{\substack{B\subseteq[n-1]\\\abs B=k-1}}
 \left(\prod_{i\in B}x_i\right)
 \left(
  a-\frac{\vartheta(\lambda)}{1-\vartheta(\lambda)}
    \sum_{r\in[n-1]\setminus B}x_r
 \right)\proj_{B\cup\set{n}}^\lambda.
\end{aligned}
\]
Every term in the first sum is positive semidefinite by
Proposition~\ref{prop:local-criterion}.  The complement of $B$ has $n-k$
elements, and hence
\[
 \sum_{r\in[n-1]\setminus B}x_r
 \leqslant\sum_{i=1}^{n-k}x_i.
\]
The coefficients in the second sum are therefore nonnegative.  Since
$u_1\cdots u_n<0$, \eqref{eq:signed-projections} proves the positivity.

For sharpness, take $n=k+1$ and $x_1=\cdots=x_k=x$.  Then the right-hand
side of \eqref{eq:signed-projections} is
\[
 \frac{k!}{\f^\lambda}x^k
 \left(
  \frac{a}{x}\sum_{r=1}^k
   \proj_{([k]\setminus\set{r})\cup\set{k+1}}^\lambda
  -\proj_{[k]}^\lambda
 \right).
\]
Proposition~\ref{prop:local-criterion}, applied with $c=x/a$, shows that
this is positive semidefinite if and only if
\[
 a\geqslant
 \frac{\vartheta(\lambda)}{1-\vartheta(\lambda)}x.\qedhere
\]
\end{proof}

The balanced case takes a particularly simple form.

\begin{corollary}\label{cor:balanced-one-negative}
Let $1\leqslant k\leqslant n$, let $\lambda\vdash k$ be branching-balanced,
and let $u_1,\ldots,u_{n-1}>0>u_n$.  If
\[
 \sum_{i=1}^n u_i^{-1}\leqslant0,
\]
then $\pcoord^\lambda(\mathbf u)\succeq0$.
\end{corollary}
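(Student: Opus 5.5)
This follows directly from Theorem~\ref{thm:one-negative}, after a change of variables and a comparison of the two thresholds. Put $x_i=u_i^{-1}>0$ for $i<n$ and $a=-u_n^{-1}>0$. Relabelling the first $n-1$ indices does not affect the conclusion: conjugating by a permutation in $\Sn_{n-1}$ permutes the summands of \eqref{eq:universal-coordinate}, and conjugation by a group element preserves $\succeq$. So we may assume the $x_i$ are in non-increasing order, and then $\pcoord^\lambda(\mathbf u)=\pcoord^\lambda(x_1^{-1},\ldots,x_{n-1}^{-1},-a^{-1})$.

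The hypothesis $\sum_i u_i^{-1}\leqslant0$ reads
\[
 a\geqslant\sum_{i=1}^{n-1}x_i .
\]
It then suffices to check the inequality required by Theorem~\ref{thm:one-negative}:
\[
 \frac{\vartheta(\lambda)}{1-\vartheta(\lambda)}\sum_{i=1}^{n-k}x_i\leqslant\sum_{i=1}^{n-1}x_i .
\]
This holds for two reasons. First, branching-balance $\vartheta(\lambda)\leqslant\frac12$ gives $\vartheta/(1-\vartheta)\leqslant1$. Second, $n-k\leqslant n-1$ because $k\geqslant1$, and all $x_i$ are positive.

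When $k=n$ the sum on the left is empty, so the condition is just $a>0$. Theorem~\ref{thm:one-negative} then gives $\pcoord^\lambda(\mathbf u)\succeq0$.

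There is no real obstacle. The only points needing care are the reordering argument (invariance under $\Sn_{n-1}$-conjugation) and the edge case $k=n$, where the threshold sum is empty.
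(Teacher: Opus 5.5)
Your proposal is correct and is essentially the paper's own proof. The paper likewise reduces to non-increasing order by a simultaneous relabelling that conjugates the coordinate, sets $x_i=u_i^{-1}$ and $a=-u_n^{-1}$, and chains $a\geqslant\sum_{i<n}x_i\geqslant\sum_{i=1}^{n-k}x_i\geqslant\frac{\vartheta(\lambda)}{1-\vartheta(\lambda)}\sum_{i=1}^{n-k}x_i$ before invoking Theorem~\ref{thm:one-negative}.
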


\begin{proof}
Simultaneously relabelling the positive parameters and the letters of
$\Sn_n$ conjugates the coordinate and preserves positivity.  We may
therefore apply Theorem~\ref{thm:one-negative} with $x_i=u_i^{-1}$ and
$a=-u_n^{-1}$.  The hypothesis gives $a\geqslant\sum_{i<n}x_i$, while
$\vartheta(\lambda)/(1-\vartheta(\lambda))\leqslant1$.  Hence
\[
 a\geqslant\sum_{i=1}^{n-k}x_i
 \geqslant\frac{\vartheta(\lambda)}{1-\vartheta(\lambda)}
          \sum_{i=1}^{n-k}x_i,
\]
and the theorem applies.
\end{proof}

We next apply Corollary~\ref{cor:balanced-one-negative} to a Wronskian with
distinct real zeros.

\begin{corollary}\label{cor:outer-critical}
Let $n\geqslant2$, let
\[
 g(t)=\prod_{a=1}^n(t-r_a),
 \qquad r_1<\cdots<r_n,
\]
and let $\xi\in(r_{n-1},r_n)$ be the largest zero of $g'$.  If
$1\leqslant k\leqslant n$ and $\lambda\vdash k$ is branching-balanced, then
\[
 \pcoord^\lambda(t-r_1,\ldots,t-r_n)\succeq0
 \qquad\text{for every }t\geqslant\xi.
\]
This is sharp uniformly in $\lambda$: if $\eta<\xi$, then the
assertion fails on $[\eta,\infty)$ for some nonempty branching-balanced
$\lambda$ with $\abs\lambda\leqslant n$.
\end{corollary}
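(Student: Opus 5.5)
The proof splits $[\xi,\infty)$ into the range $t\geqslant r_n$, where no parameter is negative, and the range $\xi\leqslant t<r_n$, where exactly one is. In the second range I will reduce to Corollary~\ref{cor:balanced-one-negative}. For sharpness I will use the single partition $\lambda=(1)$.

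Put $u_a=t-r_a$.

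\emph{Case $t\geqslant r_n$.} All $u_a\geqslant0$. By \eqref{eq:central-idempotent}, each inner sum in \eqref{eq:universal-coordinate} is
\[
 \sum_{\sigma\in\Sn_X}\chi^\lambda(\sigma)\sigma=\frac{k!}{\f^\lambda}\proj_X^\lambda ,
\]
which is an orthogonal projection up to a positive factor, hence $\succeq0$. Each weight $\prod_{a\notin X}u_a$ is a nonnegative real number. So $\pcoord^\lambda(\mathbf u)$ is a nonnegative combination of positive semidefinite elements and is $\succeq0$. When $t=r_n$ the subsets $X$ missing $n$ simply drop out, and the argument is unchanged.

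\emph{Case $\xi\leqslant t<r_n$.} Since $\xi>r_{n-1}$, we have $u_1,\ldots,u_{n-1}>0>u_n$. Then
\[
 \sum_a u_a^{-1}=\frac{g'(t)}{g(t)}.
\]
Here $g(t)<0$, because exactly one factor of $g(t)$ is negative. Also $g'(t)\geqslant0$: the polynomial $g'$ has positive leading coefficient and no zero beyond $\xi$. Hence $\sum_a u_a^{-1}\leqslant0$, and Corollary~\ref{cor:balanced-one-negative} gives $\pcoord^\lambda(\mathbf u)\succeq0$. Together with the first case, this proves positivity for every $t\geqslant\xi$.

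\emph{Sharpness.} I take $\lambda=(1)$. It is branching-balanced: its covers are $(2)$ and $(1,1)$, each of dimension $1$, so both up-transition probabilities equal $1/2$ (this is also Corollary~\ref{cor:square-balanced} with $r=1$). Here $k=1$, so $\Sn_X$ is trivial for every $X$. Therefore \eqref{eq:universal-coordinate} collapses to a scalar multiple of the identity:
\[
 \pcoord^{(1)}(\mathbf u)=e_{n-1}(u_1,\ldots,u_n)\,\one=g'(t)\,\one .
\]
The zeros of $g$ are real and distinct, so by Rolle's theorem the zeros of $g'$ are real and simple and interlace those of $g$. Thus $g'$ changes sign at its largest zero $\xi$, and $g'(t)<0$ for $t$ in a small left neighbourhood of $\xi$. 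Given $\eta<\xi$, pick such a $t$ with $\max(\eta,r_{n-1})<t<\xi$. Then $t\in[\eta,\infty)$ and $\pcoord^{(1)}(\mathbf u)\not\succeq0$, so the assertion fails on $[\eta,\infty)$.

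The only step needing care is this sign analysis of $g'$ and $g$ near $\xi$, namely that $\xi$ is a simple zero of $g'$. Everything else follows directly from the preceding results.
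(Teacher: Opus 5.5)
Your proposal is correct and follows the paper's proof. It uses the same split at $r_n$, the same reduction via $g'(t)/g(t)\leqslant0$ to Corollary~\ref{cor:balanced-one-negative} on $[\xi,r_n)$, and the same sharpness witness $\pcoord^{(1)}=g'(t)\one$. The only difference is that for $t\geqslant r_n$ you check positivity directly, writing the coordinate as a nonnegative combination of the projections $\proj_X^\lambda$, whereas the paper cites \cite[Proposition~1.12]{KarpPurbhoo}; your version is self-contained and equally valid.
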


\begin{proof}
Suppose first that $\xi\leqslant t<r_n$.  The first $n-1$ parameters are
positive and the last is negative.  Since $g(t)<0$ and $g'(t)\geqslant0$,
\[
 \sum_{a=1}^n\frac{1}{t-r_a}=\frac{g'(t)}{g(t)}\leqslant0.
\]
The assertion follows from Corollary~\ref{cor:balanced-one-negative}.  For
$t\geqslant r_n$, all parameters are nonnegative, and the assertion follows
from~\cite[Proposition~1.12]{KarpPurbhoo}.

The partition $(1)$ is branching-balanced and
\[
 \pcoord^{(1)}(t-r_1,\ldots,t-r_n)=g'(t)\one.
\]
Since $g'(t)<0$ on $(r_{n-1},\xi)$, every half-line with left endpoint below
$\xi$ contains a point where this coordinate is negative.
\end{proof}

For the parameter vector $(t-r_1,\ldots,t-r_n)$, the corresponding Wronskian
is the translate $v\mapsto g(v+t)$.  On each common eigenspace in a Specht
module, the eigenvalues of the universal coordinates are the normalised Pl\"ucker
coordinates of the corresponding point in the fibre of the Wronski
map~\cite[Theorem~1.3(v)]{KarpPurbhoo}.  Thus the positivity known for
$t\geqslant r_n$ extends to $t\geqslant\xi$.

\section{The critical coordinate and the Octopus inequality}
\label{sec:critical}

We shall use the following quadratic Pl\"ucker relation
\cite[Equation~(2.6)]{KarpPurbhoo}:
\begin{equation}\label{eq:first-Plucker}
 -\pcoord^\varnothing\pcoord^{(2,2)}
 +\pcoord^{(1)}\pcoord^{(2,1)}
 -\pcoord^{(1,1)}\pcoord^{(2)}=0.
\end{equation}
This is the Klein quadric relation, written in partition notation.

Let $n\geqslant2$ and let $x_1,\ldots,x_{n-1}>0$.  Put
\begin{equation}\label{eq:critical-intro}
 u_i=x_i^{-1}\quad(i<n),
 \qquad u_n=-\left(\sum_{i<n}x_i\right)^{-1}.
\end{equation}
Thus $u_1\cdots u_n<0$ and $\sum_a u_a^{-1}=0$.  If $n\geqslant4$, then
$(2,2)$ is branching-balanced by Corollary~\ref{cor:square-balanced}, and
Corollary~\ref{cor:balanced-one-negative} gives
\begin{equation}\label{eq:22-positive}
 -\frac{1}{u_1\cdots u_n}\pcoord^{(2,2)}(\mathbf u)\succeq0.
\end{equation}
When $n<4$, the coordinate vanishes by convention, so the assertion remains
valid.

Write
\begin{equation}\label{eq:omega-T}
 \omega=\sum_{i<n}x_i^2+\sum_{i<j<n}x_ix_j,
 \qquad
 \trans=\left(\sum_{i<n}x_i\right)
        \sum_{i<n}x_i(i\,n)-\sum_{i<j<n}x_ix_j(i\,j).
\end{equation}
Then
\begin{equation}\label{eq:critical-identities}
 \omega=\frac12\sum_{a=1}^nu_a^{-2},
 \qquad
 \sum_{a<b}\frac1{u_au_b}=-\omega,
 \qquad
 \sum_{a<b}\frac{(a\,b)}{u_au_b}=-\trans.
\end{equation}
Indeed, the first two identities follow by expanding
$\left(\sum_{i<n}x_i\right)^2$ and using
$\sum_a u_a^{-1}=0$, while the last follows directly from
\eqref{eq:critical-intro}.

\begin{lemma}\label{lem:low-coordinates}
Let $n\geqslant2$, and let $\mathbf u$, $\omega$, and $\trans$ be given by \eqref{eq:critical-intro} and \eqref{eq:omega-T}.  
Then
\[
\begin{aligned}
 \pcoord^\varnothing&=(u_1\cdots u_n)\one,
 &\qquad \pcoord^{(1)}&=0,\\
 \pcoord^{(1,1)}&=-(u_1\cdots u_n)(\omega\one-\trans),
 &\qquad \pcoord^{(2)}&=-(u_1\cdots u_n)(\omega\one+\trans).
\end{aligned}
\]
\end{lemma}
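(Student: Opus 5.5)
The plan is to evaluate each low coordinate directly from the definition \eqref{eq:universal-coordinate}. We factor out $u_1\cdots u_n$, so that $\prod_{a\notin X}u_a=(u_1\cdots u_n)\prod_{a\in X}u_a^{-1}$. Then we substitute the identities \eqref{eq:critical-identities}. The formula for $\pcoord^\varnothing$ is just \eqref{eq:P-empty}, so nothing is needed there.

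For $\lambda=(1)$, the only subsets are $X=\set{a}$ with $\Sn_X$ trivial and $\chi^{(1)}=1$. This gives
\[
 \pcoord^{(1)}=(u_1\cdots u_n)\sum_a u_a^{-1}\,\one,
\]
and the sum vanishes because $\sum_a u_a^{-1}=0$ at the critical specialisation \eqref{eq:critical-intro}.

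For $k=2$, each $X=\set{a,b}$ has $\Sn_X=\set{\one,(a\,b)}$. The trivial character assigns $1$ to both elements, while the sign character assigns $1$ to $\one$ and $-1$ to $(a\,b)$. Hence
\[
 \pcoord^{(2)}=(u_1\cdots u_n)\sum_{a<b}\frac{\one+(a\,b)}{u_au_b},
 \qquad
 \pcoord^{(1,1)}=(u_1\cdots u_n)\sum_{a<b}\frac{\one-(a\,b)}{u_au_b}.
\]
Inserting $\sum_{a<b}(u_au_b)^{-1}=-\omega$ and $\sum_{a<b}(a\,b)/(u_au_b)=-\trans$ from \eqref{eq:critical-identities} gives
\[
 \pcoord^{(2)}=-(u_1\cdots u_n)(\omega\one+\trans),
 \qquad
 \pcoord^{(1,1)}=-(u_1\cdots u_n)(\omega\one-\trans),
\]
which are exactly the claimed formulas.

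The only point needing care is the verification of \eqref{eq:critical-identities}, which the paper only sketches. Write $S=\sum_{i<n}x_i=-u_n^{-1}$. Then
\[
 \sum_{a<b}\frac{1}{u_au_b}=\sum_{i<j<n}x_ix_j-S^2=-\Bigl(\sum_i x_i^2+\sum_{i<j}x_ix_j\Bigr)=-\omega.
\]
Similarly, the transposition sum is $\sum_{i<j<n}x_ix_j(i\,j)-S\sum_{i<n}x_i(i\,n)=-\trans$. I do not expect any genuine obstacle beyond this bookkeeping and keeping the signs of the characters straight.
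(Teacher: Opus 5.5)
Your proposal is correct and follows the paper's proof step for step: you expand the definition with $u_1\cdots u_n$ factored out, use the trivial character for $(1)$ and the trivial and sign characters for $(2)$ and $(1,1)$, and then substitute the identities \eqref{eq:critical-identities}. Your explicit check of $\sum_{a<b}(u_au_b)^{-1}=-\omega$ and of the transposition sum equalling $-\trans$ is also correct, and it fills in the bookkeeping the paper only sketches.
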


\begin{proof}
Equation \eqref{eq:P-empty} gives the first identity.  Since the character
of $\Sn_1$ is trivial,
\[
 \pcoord^{(1)}
 =(u_1\cdots u_n)\left(\sum_{a=1}^nu_a^{-1}\right)\one=0.
\]
The characters of $\Sn_2$ indexed by $(2)$ and $(1,1)$ are the trivial and
sign characters.  Therefore
\[
 \pcoord^{(1,1)}
 =(u_1\cdots u_n)\sum_{a<b}\frac{\one-(a\,b)}{u_au_b},
 \qquad
 \pcoord^{(2)}
 =(u_1\cdots u_n)\sum_{a<b}\frac{\one+(a\,b)}{u_au_b}.
\]
The remaining identities follow from \eqref{eq:critical-identities}.
\end{proof}

\begin{proposition}\label{prop:square-identity}
Let $n\geqslant2$, and let $\mathbf u$, $\omega$, and $\trans$ be given by \eqref{eq:critical-intro} and \eqref{eq:omega-T}.
Then
\[
 -\frac{1}{u_1\cdots u_n}\pcoord^{(2,2)}(\mathbf u)
 =\omega^2\one-\trans^2.
\]
\end{proposition}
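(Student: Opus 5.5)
The plan is to read off $\pcoord^{(2,2)}$ from the first Pl\"ucker relation \eqref{eq:first-Plucker}, using the low coordinates computed in Lemma~\ref{lem:low-coordinates}.

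Since $\pcoord^{(1)}=0$ at the critical specialisation \eqref{eq:critical-intro}, the middle term of \eqref{eq:first-Plucker} drops out. Writing $\Pi=u_1\cdots u_n$, the relation becomes
\[
 \pcoord^\varnothing\pcoord^{(2,2)}=-\pcoord^{(1,1)}\pcoord^{(2)}.
\]
Substituting $\pcoord^\varnothing=\Pi\one$ and the formulas for $\pcoord^{(1,1)}$ and $\pcoord^{(2)}$ gives
\[
 \Pi\,\pcoord^{(2,2)}=-\Pi^2(\omega\one-\trans)(\omega\one+\trans).
\]
Here $\omega$ is a scalar, so $\omega\one$ commutes with $\trans$ and the product expands to $\omega^2\one-\trans^2$. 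Note that this uses only the commutativity of the scalar with $\trans$, not the general commutativity of the coordinates. Since $\Pi\neq0$ (all $x_i>0$), we can divide by $-\Pi^2$. This yields $-\Pi^{-1}\pcoord^{(2,2)}=\omega^2\one-\trans^2$, as claimed.

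The one point to check is that \eqref{eq:first-Plucker} holds in the normalisation used here. The paper's $\pcoord^\lambda$ is Karp--Purbhoo's coordinate at $t=0$, and the relation is cited in exactly this partition notation, so this is granted. For small $n$ the identity should be read with $\pcoord^{(2,2)}=0$ and the relation still valid. As a sanity check, at $n=2$ we have $\trans=x_1^2(1\,2)$ and $\omega=x_1^2$, so $\trans^2=x_1^4\one=\omega^2\one$ and both sides vanish. I do not expect a genuine obstacle. If one wished to avoid citing \eqref{eq:first-Plucker}, the fallback would be a direct expansion of $\pcoord^{(2,2)}$ via $\chi^{(2,2)}$ on $\Sn_4$ (values $2,0,-1,2,0$ on the classes $e$, transpositions, $3$-cycles, double transpositions, $4$-cycles) compared with $\trans^2$. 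That computation is considerably messier, and I would use it only as a check.
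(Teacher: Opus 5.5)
Your proposal is correct and is the paper's proof: substitute Lemma~\ref{lem:low-coordinates} into the Klein relation \eqref{eq:first-Plucker}, drop the middle term because $\pcoord^{(1)}=0$, and divide by $(u_1\cdots u_n)^2$, using $(\omega\one-\trans)(\omega\one+\trans)=\omega^2\one-\trans^2$. Your $n=2$ check is a useful confirmation that the identity is consistent with the convention $\pcoord^{(2,2)}=0$ for $n<4$.
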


\begin{proof}
Substitute Lemma~\ref{lem:low-coordinates} into
\eqref{eq:first-Plucker}.  The middle term vanishes and
\[
 (u_1\cdots u_n)\pcoord^{(2,2)}
 +(u_1\cdots u_n)^2(\omega\one-\trans)(\omega\one+\trans)=0.
\]
The two factors commute, being polynomials in $\trans$, and the result
follows on dividing by $(u_1\cdots u_n)^2$.
\end{proof}

On a common eigenspace for the universal coordinates, the vanishing of the
Pl\"ucker coordinate indexed by $(1)$ defines the tangent hyperplane to the
Klein quadric at the coordinate point indexed by $(2,1)$; the restriction of
the Klein relation to this hyperplane is the factorisation above.

\begin{corollary}\label{cor:symmetric-octopus}
Let $n\geqslant2$, let $x_1,\ldots,x_{n-1}>0$, and let $\mathbf u$, $\omega$,
and $\trans$ be given by \eqref{eq:critical-intro} and \eqref{eq:omega-T}.
Then
$\pcoord^{(2,2)}(\mathbf u)\succeq0$ if and only if
\[
 -\omega\one\preceq\trans\preceq\omega\one.
\]
Equivalently, every unitary representation $\rho$ of $\Sn_n$ satisfies
\[
 \lVert\rho(\trans)\rVert_{\mathrm{op}}\leqslant\omega.
\]
Both endpoints are attained.
\end{corollary}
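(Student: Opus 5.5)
Since $u_1\cdots u_n<0$, Proposition~\ref{prop:square-identity} shows that $\pcoord^{(2,2)}(\mathbf u)\succeq0$ is equivalent to $\omega^2\one-\trans^2\succeq0$, where positivity is meant in the sense used throughout for $-\pcoord^{(2,2)}/(u_1\cdots u_n)$, which is a positive multiple of the coordinate. The element $\trans$ is self-adjoint, because transpositions are involutions and the coefficients are real. In any unitary representation $\rho$, the spectral theorem diagonalises $\rho(\trans)$ with real eigenvalues $\tau$, and $\omega^2-\tau^2\geqslant0$ holds if and only if $\abs\tau\leqslant\omega$. Here we use $\omega>0$. Hence $\rho(\omega^2\one-\trans^2)\succeq0$ if and only if $-\omega\leqslant\rho(\trans)\leqslant\omega$, that is, $\lVert\rho(\trans)\rVert_{\mathrm{op}}\leqslant\omega$. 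Taking all $\rho$, or just the left regular representation, gives the equivalence $-\omega\one\preceq\trans\preceq\omega\one$, and the operator-norm formulation follows at once.

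It remains to show that both endpoints are attained. The plan is to check one-dimensional representations directly.

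In the trivial representation every transposition acts as $1$, so
\[
 \trans\mapsto\Bigl(\sum_{i<n}x_i\Bigr)^2-\sum_{i<j<n}x_ix_j=\sum_{i<n}x_i^2+\sum_{i<j<n}x_ix_j=\omega .
\]

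In the sign representation every transposition acts as $-1$, so $\trans\mapsto-\omega$.

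Thus $\omega$ and $-\omega$ are eigenvalues of $\trans$ in the left regular representation, which contains both characters. Consequently neither $\omega\one-\trans$ nor $\omega\one+\trans$ is invertible, and the bounds cannot be improved.

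The sign representation realises the lower endpoint. This matches the remark that the factorisation pairs the Octopus operator with its image under the sign automorphism $\sigma\mapsto\operatorname{sgn}(\sigma)\sigma$, which sends $\trans$ to $-\trans$ and fixes $\omega$.

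There is no real obstacle here. The only points needing care are the sign bookkeeping, namely that the factor $-1/(u_1\cdots u_n)$ is positive, and the observation that the two factors of $\omega^2\one-\trans^2$ commute, so the spectral argument applies to a single self-adjoint element.
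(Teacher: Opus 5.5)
Your proposal is correct and follows the paper's proof: you use the sign of $u_1\cdots u_n$ together with Proposition~\ref{prop:square-identity} to reduce to $\omega^2\one-\trans^2\succeq0$, the spectral theorem for the self-adjoint element $\trans$ to obtain the two-sided bound and the operator-norm form, and the trivial and sign representations to show both endpoints are attained. The paper also uses the sign automorphism to show that the upper bound alone (the Octopus inequality) already gives the lower bound; you mention this only in passing, and it is not needed for the statement as given.
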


\begin{proof}
Since $u_1\cdots u_n<0$, Proposition~\ref{prop:square-identity} shows that
positivity of the coordinate is equivalent to
$\omega^2\one-\trans^2\succeq0$.  Since $\trans$ is
self-adjoint, the spectral theorem gives the two-sided bound and the
operator-norm formulation.  In the trivial representation,
$\rho(\trans)=\omega$, while in the sign representation
$\rho(\trans)=-\omega$.

The upper bound is precisely the Octopus inequality.  Conversely, the sign
automorphism $\sigma\mapsto\operatorname{sgn}(\sigma)\sigma$ preserves
positivity and sends $\trans$ to $-\trans$, so the upper bound also gives
the lower bound.
\end{proof}

Now, surprisingly, the Octopus inequality appears.

\begin{proof}[Proof of Theorem~\ref{thm:octopus}]
For positive weights, the result follows from \eqref{eq:22-positive} and
Corollary~\ref{cor:symmetric-octopus}.  If some $x_i$ is zero, replace every
$x_i$ by $x_i+\varepsilon$ and let $\varepsilon\downarrow0$.  The positive
semidefinite cone is closed.
\end{proof}

\section{Hypergraph generalisations}\label{sec:hypergraph}

For a nonempty set $A\subseteq[n]$, write
\[
 \alpha_A=\abs A\bigl(\one-\proj_A^{(\abs A)}\bigr)
 =\frac{1}{(\abs A-1)!}\sum_{\sigma\in\Sn_A}(\one-\sigma),
\]
and put $\alpha_\varnothing=0$.  Thus $\alpha_A\succeq0$ and
$0\preceq\alpha_A\preceq\abs A\one$; in particular,
$\alpha_{\set{a,b}}=\one-(a\,b)$.  This is the notation of Alon, Kozma,
and Puder~\cite[Definition~1.5]{AlonKozmaPuder}.  We use the
$(2,2)$-positivity proved above to give Pl\"ucker-theoretic proofs of
two of their hypergraph inequalities
\cite[Theorems~1.7 and~1.9]{AlonKozmaPuder}.

We shall use the critical $(2,2)$-coordinate in the following form.  Fix
$q\in[n]$, let $\varnothing\neq X\subseteq[n]\setminus\set{q}$, and let
$y_a\geqslant0$ for $a\in X$.  For positive weights, take
$u_a=y_a^{-1}$ and $u_q=-(\sum_{a\in X}y_a)^{-1}$ in
$\C[\Sn_{X\cup\set{q}}]$.  After relabelling, Corollary~\ref{cor:symmetric-octopus}
and the standard inclusion into $\C[\Sn_n]$ give
\begin{equation}\label{eq:hypergraph-critical-22}
 \left(\sum_{a\in X}y_a\right)
 \sum_{a\in X}y_a\alpha_{\set{a,q}}
 \succeq
 \sum_{\set{a,b}\subseteq X}y_ay_b\alpha_{\set{a,b}}.
\end{equation}
Zero weights follow by continuity.

\begin{lemma}\label{lem:hypergraph-projections}
Let $A\subseteq[n]$ have cardinality $m\geqslant1$, and let
$q\in[n]\setminus A$.  Then, in $\C[\Sn_{A\cup\set{q}}]$,
\[
 \alpha_{A\cup\set{q}}\proj_A^{(m)}
 =\left(\sum_{a\in A}\alpha_{\set{a,q}}\right)\proj_A^{(m)}.
\]
Moreover, in $\C[\Sn_A]$,
\[
 \sum_{\set{a,b}\subseteq A}\alpha_{\set{a,b}}\succeq\alpha_A.
\]
\end{lemma}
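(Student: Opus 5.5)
The first identity is an explicit computation in the group algebra; the second follows from it by induction on $m$.

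For the first identity, write $\proj_A^{(m)}=\frac1{m!}\sum_{\tau\in\Sn_A}\tau$ and $\alpha_{A\cup\set q}=\frac1{m!}\sum_{\sigma\in\Sn_{A\cup\set q}}(\one-\sigma)$. I will decompose $\Sn_{A\cup\set q}$ into right cosets of $\Sn_A$. The coset representatives are the identity and the transpositions $(a\,q)$ for $a\in A$. Since $\sigma\tau\proj_A^{(m)}=\sigma\proj_A^{(m)}$ for $\tau\in\Sn_A$, we get
\[
 \sum_{\sigma\in\Sn_{A\cup\set q}}\sigma\,\proj_A^{(m)}
 =m!\Bigl(\one+\sum_{a\in A}(a\,q)\Bigr)\proj_A^{(m)}.
\]
Therefore
\[
 \alpha_{A\cup\set q}\proj_A^{(m)}
 =\frac{1}{m!}\Bigl((m+1)!\,\one-m!\bigl(\one+\textstyle\sum_a(a\,q)\bigr)\Bigr)\proj_A^{(m)}
 =\Bigl(m\one-\sum_{a\in A}(a\,q)\Bigr)\proj_A^{(m)},
\]
and the right-hand side equals $\bigl(\sum_{a\in A}\alpha_{\set{a,q}}\bigr)\proj_A^{(m)}$. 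The only point to check is the orientation: the transposition $(a\,q)$ must stand to the left of $\proj_A^{(m)}$, which it does.

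For the inequality, I will induct on $m$. The cases $m=1$ and $m=2$ are equalities, since $\alpha_{\set a}=0$ and $\alpha_{\set{a,b}}$ is itself the single term. For the inductive step, set $A'=A\cup\set q$ with $\abs A=m$. The inductive hypothesis on $A$ gives
\[
 \sum_{\set{a,b}\subseteq A'}\alpha_{\set{a,b}}\succeq \alpha_A+S,
 \qquad S=\sum_{a\in A}\alpha_{\set{a,q}}.
\]
It then suffices to show $\alpha_A+S\succeq\alpha_{A'}$.

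I will work in an arbitrary unitary representation and split the space as $V=\operatorname{im}\proj\oplus\ker\proj$, where $\proj=\proj_A^{(m)}$.
\begin{itemize}
\item On $\operatorname{im}\proj$, the operator $\alpha_A$ vanishes.
\item On $\ker\proj$, the operator $\alpha_A$ equals $m\one$, since $\alpha_A=m(\one-\proj)$.
\item Similarly, $\alpha_{A'}=(m+1)(\one-\proj_{A'}^{(m+1)})$.
\end{itemize}
The first identity says that $\alpha_{A'}$ and $S$ agree on $\operatorname{im}\proj$. But both are self-adjoint and neither commutes with $\proj$, so agreement on $\operatorname{im}\proj$ alone does not control the cross terms, and a direct comparison is not enough.

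This is the main obstacle. I would instead aim to show the stronger single-step inequality $\alpha_A+S\succeq\alpha_{A'}$ by exhibiting $\alpha_A+S-\alpha_{A'}$ as a nonnegative combination of positive elements.

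\begin{itemize}
\item \textbf{Reduction to $\operatorname{im}\proj$.} On $\ker\proj$, $\alpha_A=m\one$. Also $0\preceq\alpha_{A'}\preceq(m+1)\one$, and each $\alpha_{\set{a,q}}\succeq0$. I would try to write $\alpha_{A'}-S$ as an operator that vanishes on $\operatorname{im}\proj$ (by the first identity) from the right and is bounded by $m$. Concretely, set $D=\alpha_{A'}-S$. Then $D\proj=0$, and by self-adjointness $\proj D=0$, so $D=(\one-\proj)D(\one-\proj)$ lives entirely on $\ker\proj$. This is what makes the reduction work.
\item \textbf{Bound on $\ker\proj$.} There I need $m\one\succeq D$, which follows from $D\preceq\alpha_{A'}\preceq(m+1)\one$. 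That bound falls one short, so I would refine it using $S\succeq0$ on $\ker\proj$. Failing that, I would use an eigenvalue argument: the Jucys--Murphy element $\sum_a(a\,q)$ has eigenvalues (contents) at least $-m$, which gives $S\preceq 2m\one$. Alternatively, $\alpha_{A'}$ has largest eigenvalue $m+1$ only on the sign-isotypic part, where $S=2m$, which suggests the inequality $D\preceq m\one$ holds.
\end{itemize}

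I expect this final bound $D\preceq m\one$ on $\ker\proj$ to be the delicate step. I would verify it representation by representation via the Young branching decomposition, using Proposition~\ref{prop:branching-projection}.
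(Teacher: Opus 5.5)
Your proof of the first identity is correct and is the paper's coset argument. For the inequality, the inductive reduction is also sound: it suffices to prove $\alpha_A+S\succeq\alpha_{A\cup\set{q}}$, and your observation $D\proj=\proj D=0$ correctly reduces this to $D\preceq m\one$ on $\ker\proj$. But this is exactly the step you leave open, and none of your three suggestions establishes it.
\begin{itemize}
\item The bound $D\preceq\alpha_{A\cup\set{q}}\preceq(m+1)\one$ is off by one.
\item The bound $S\preceq 2m\one$ controls $S$ from the wrong side; you need $S\succeq\one$ on $\ker\proj$.
\item The claim that $\alpha_{A\cup\set{q}}$ has eigenvalue $m+1$ only on the sign-isotypic part is false. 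In fact $\alpha_{A\cup\set{q}}=(m+1)(\one-\proj_{A\cup\set{q}}^{(m+1)})$ equals $(m+1)\one$ on every nontrivial isotypic component. In particular it equals $(m+1)\one$ on all of $\ker\proj$, since $\proj_{A\cup\set{q}}^{(m+1)}=\proj_{A\cup\set{q}}^{(m+1)}\proj$.
\end{itemize}
The ``main obstacle'' you describe is also not real. $\alpha_{A\cup\set{q}}$ is central in $\C[\Sn_{A\cup\set{q}}]$, and $S$ is invariant under conjugation by $\Sn_A$, which permutes the transpositions $(a\,q)$. So both commute with $\proj$.

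The missing idea is spectral information about the Jucys--Murphy element $J=\sum_{a\in A}(a\,q)$. On $\ker\proj$ one has $D=\one+J$, so what you need is $J\preceq(m-1)\one$ on $\ker\proj$. The trivial norm bound gives only $J\preceq m\one$; you need the actual integer spectrum of $J$ and where its top eigenvalue lives.

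Write $J=T_{A\cup\set{q}}-T_A$ with $T_X=\sum_{\set{a,b}\subseteq X}(a\,b)$. Each $T_X$ acts on $S^\nu$ by the content sum $c(\nu)=\sum_{(i,j)\in\nu}(j-i)$. Hence $J$ acts on the $S^\lambda$-component of $\Res S^\mu$ by $c(\mu)-c(\lambda)$, the content of the box $\mu/\lambda$. On $\ker\proj$ we have $\lambda\neq(m)$, so this content is at most $\lambda_1\leqslant m-1$, and your induction closes. Proposition~\ref{prop:branching-projection} does not supply this information.

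Once you invoke content sums, the induction and the first identity become unnecessary. As in the paper, both $\sum_{\set{a,b}\subseteq A}\alpha_{\set{a,b}}=\binom{m}{2}\one-T_A$ and $\alpha_A$ are central in $\C[\Sn_A]$. On $S^\lambda$ with $\lambda\neq(m)$ they act by $\binom{m}{2}-c(\lambda)$ and $m$ respectively. Among nontrivial $\lambda$, $c(\lambda)$ is maximised at $(m-1,1)$, where it equals $\binom{m-1}{2}-1$, so the inequality follows in one step.
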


\begin{proof}
The coset decomposition of $\Sn_{A\cup\set{q}}$ over $\Sn_A$ gives
\[
 (m+1)\proj_{A\cup\set{q}}^{(m+1)}
 =\left(\one+\sum_{a\in A}(a\,q)\right)\proj_A^{(m)}.
\]
Subtracting this equality from $(m+1)\proj_A^{(m)}$ gives the first identity.

For the second assertion, the two sides are central in $\C[\Sn_A]$.
They vanish on the trivial representation.  On $S^\lambda$, where
$\lambda\vdash m$ and $\lambda\neq(m)$, the right-hand side acts as
$m\one$, whereas the left-hand side acts as
\[
 \left(\binom{m}{2}
 -\sum_{(i,j)\in\lambda}(j-i)\right)\one.
\]
Here the sum of all transpositions acts by the content
sum~\cite[Section~4.1, Exercise~4.17(c)]{FultonHarris}.
If $\lambda\neq(m),(m-1,1)$, move the last box in the last nonempty row to
the end of the first row.  This gives another partition, and the content
sum increases by
\[
 \lambda_1-\lambda_j+j>0,
\]
where $j$ is the last nonempty row.  Repeating the move gives $(m-1,1)$.
The content sum is therefore largest among nontrivial partitions at
$(m-1,1)$, where it is $\binom{m-1}{2}-1$.  Hence the displayed scalar is at
least $m$, and the result follows.
\end{proof}

The following is the generalised Octopus inequality of Alon, Kozma, and
Puder for disjoint sets~\cite[Theorem~1.7]{AlonKozmaPuder}.

\begin{theorem}\label{thm:hypergraph-disjoint}
Let $t\geqslant1$, let $q\in[n]$, let
$A_1,\ldots,A_t\subseteq[n]\setminus\set{q}$ be
pairwise disjoint, and let $c_1,\ldots,c_t\geqslant0$.  Set
$C=\sum_{i=1}^tc_i\abs{A_i}$.  Then
\[
 C\sum_{j=1}^tc_j
   \bigl(\alpha_{A_j\cup\set{q}}-\alpha_{A_j}\bigr)
 \succeq
 \sum_{i=1}^tc_i^2\alpha_{A_i}
 +\sum_{1\leqslant i<j\leqslant t}c_ic_j
   \bigl(\alpha_{A_i\cup A_j}-\alpha_{A_i}-\alpha_{A_j}\bigr).
\]
\end{theorem}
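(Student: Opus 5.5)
The plan is to specialise the critical $(2,2)$-inequality \eqref{eq:hypergraph-critical-22} to the weighting that is constant on each block, and then to pass from transposition sums to the $\alpha_{A}$'s using Lemma~\ref{lem:hypergraph-projections}. Put $X=A_1\sqcup\cdots\sqcup A_t$ and $y_a=c_i$ for $a\in A_i$. Then $\sum_{a\in X}y_a=C$, and \eqref{eq:hypergraph-critical-22} reads
\[
 C\sum_{j=1}^tc_j\sum_{a\in A_j}\alpha_{\set{a,q}}
 \succeq
 \sum_{i=1}^tc_i^2\sum_{\set{a,b}\subseteq A_i}\alpha_{\set{a,b}}
 +\sum_{i<j}c_ic_j\sum_{a\in A_i,\,b\in A_j}\alpha_{\set{a,b}} .
\]
This is the ``linearised'' form of the target. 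Write $m_i=\abs{A_i}$ and $\proj=\prod_i\proj_{A_i}^{(m_i)}$. The factors commute because their supports are disjoint, so $\proj$ is a self-adjoint idempotent.

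I would then split every representation space as $\operatorname{im}\proj\oplus\ker\proj$, or more finely along the commuting family of projections $\proj_{A_i}^{(m_i)}$ and their complements.

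On the range of $\proj$, the terms $\alpha_{A_i}$ and $\alpha_{\set{a,b}}$ with $a,b$ in the same block vanish. Compressing by $\proj$ preserves $\succeq$, and the first identity of Lemma~\ref{lem:hypergraph-projections} converts $\proj\bigl(\sum_{a\in A_j}\alpha_{\set{a,q}}\bigr)\proj$ into $\proj\,\alpha_{A_j\cup\set{q}}\proj$. So the left-hand sides of the displayed inequality and of the theorem agree after compression.

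For the cross terms, I would apply the same identity twice, first with $A=A_i$ and each $b\in A_j$ in the role of $q$, and then symmetrically. The aim is to compare $\proj\,\alpha_{A_i\cup A_j}\proj$ with $\proj\sum_{a\in A_i,\,b\in A_j}\alpha_{\set{a,b}}\proj$. For the theorem we need the one-sided bound
\[
 \proj\,\alpha_{A_i\cup A_j}\proj\preceq\proj\sum_{a\in A_i,\,b\in A_j}\alpha_{\set{a,b}}\proj ,
\]
which would follow from a two-block analogue of the second part of Lemma~\ref{lem:hypergraph-projections}. That analogue can be checked via content sums on the $\Sn_{A_i}\times\Sn_{A_j}$-invariant vectors of $S^\mu$, $\mu\vdash m_i+m_j$.

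Off the range of $\proj$, I would use the second part of Lemma~\ref{lem:hypergraph-projections}, $\sum_{\set{a,b}\subseteq A_i}\alpha_{\set{a,b}}\succeq\alpha_{A_i}$. Together with the trivial bounds $0\preceq\alpha\preceq\abs{\cdot}\one$, this controls the diagonal blocks. The subtractions $-\alpha_{A_j}$ on the left and $-c_ic_j(\alpha_{A_i}+\alpha_{A_j})$ on the right are designed to absorb exactly these contributions.

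The main obstacle is that $\proj$ does not commute with the cross operators $\alpha_{\set{a,q}}$ and $\alpha_{\set{a,b}}$ with $a\in A_i$, $b\in A_j$. So the off-diagonal blocks $\proj(\cdot)(\one-\proj)$ do not vanish, and a block-by-block comparison is not automatic. What I would try first is to avoid compressions entirely. The idea is to show that the target difference (left minus right of the theorem) dominates the displayed difference term by term. That reduces to the two monotonicity statements
\[
 \alpha_{A_j\cup\set{q}}-\alpha_{A_j}\succeq\Bigl(\sum_{a\in A_j}\alpha_{\set{a,q}}\Bigr)\text{-type comparison}
\]
and its two-block analogue. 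Each of these would be proved in $\C[\Sn_{A_j\cup\set{q}}]$ or $\C[\Sn_{A_i\cup A_j}]$ by restricting to $\Sn_{A}$-isotypic pieces, using Proposition~\ref{prop:branching-projection} and the content-sum computation from Lemma~\ref{lem:hypergraph-projections}. If the termwise inequalities go the wrong way, the fallback is a Schur-complement argument on the $2\times2$ block decomposition along $\proj$.
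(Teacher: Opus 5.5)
Your proposal has a genuine gap. The obstacle you single out is not a real one, and the way you propose to get around it cannot work.

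\textbf{The theorem is block diagonal.} Every set indexing an $\alpha$ in the statement ($A_j$, $A_j\cup\set{q}$, $A_i\cup A_j$) either contains or is disjoint from each $A_k$. Hence every operator in the theorem commutes with every $\proj_{A_k}^{(\abs{A_k})}$. The difference of the two sides is therefore block diagonal for the common eigenspace decomposition $\bigoplus_{Z\subseteq[t]}V_Z$, where $\proj_{A_k}^{(\abs{A_k})}$ equals $\one$ on $V_Z$ for $k\in Z$ and $0$ for $k\notin Z$. It suffices to prove positivity on each $V_Z$. The non-commuting operators $\alpha_{\set{a,q}}$ and $\alpha_{\set{a,b}}$ then enter only through compressions of an auxiliary inequality to $V_Z$, and compression preserves $\succeq$ whatever the off-diagonal blocks are. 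No Schur complement is needed.

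\textbf{Termwise domination is false.} The difference of the two sides of the theorem does not dominate the difference of the two sides of your specialised inequality. In the sign representation with $\abs A\geqslant2$, $\alpha_{A\cup\set{q}}-\alpha_A$ acts as $1$, whereas $\sum_{a\in A}\alpha_{\set{a,q}}$ acts as $2\abs A$. For $t=1$ and $m_1=\abs{A_1}\geqslant2$, the target difference acts there as $0$, while yours acts as $c_1^2(m_1^2+m_1)>0$.

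\textbf{What is missing is the argument off the top summand.} Your argument on $\operatorname{im}\proj=V_{[t]}$ is correct once block diagonality is known. The two-block bound you want is simply the second part of Lemma~\ref{lem:hypergraph-projections} for $A_i\cup A_j$, because the within-block $\alpha_{\set{a,b}}$ vanish on the range; no new content-sum computation is required.

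The gap is the case $V_Z$ with $Z\neq[t]$. For $k\notin Z$, the operators $\alpha_{A_k}$, $\alpha_{A_k\cup\set{q}}$ and $\alpha_{A_k\cup A_j}$ act on $V_Z$ as the scalars $\abs{A_k}$, $\abs{A_k}+1$ and $\abs{A_k}+\abs{A_j}$. This holds because $\proj_B^{(\abs B)}=\proj_B^{(\abs B)}\proj_{A_k}^{(\abs{A_k})}$ whenever $B\supseteq A_k$. Expanding $C\sum_{k\notin Z}c_k$, these scalars cancel up to the nonnegative quantity $\sum c_ic_j(\abs{A_i}+\abs{A_j})$ over pairs $i<j$ outside $Z$. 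What remains is
\[
 C\sum_{i\in Z}c_i\alpha_{A_i\cup\set{q}}
 \succeq\sum_{\substack{i<j\\ i,j\in Z}}c_ic_j\alpha_{A_i\cup A_j}
 \quad\text{on }V_Z.
\]
This follows from \eqref{eq:hypergraph-critical-22} applied with $X=\bigcup_{i\in Z}A_i$ only. First split off the surplus $\bigl(C-\sum_{i\in Z}c_i\abs{A_i}\bigr)\sum_{i\in Z}c_i\alpha_{A_i\cup\set{q}}\succeq0$, then compress to $V_Z$ exactly as you do on the top summand.

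Your specialisation instead uses all of $A_1\sqcup\cdots\sqcup A_t$. On $V_Z$ the terms $\sum_{a\in A_k}\alpha_{\set{a,q}}$ with $k\notin Z$ are not scalars in general. Nothing in your plan (the trivial bounds together with the second part of the lemma) controls them.
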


\begin{proof}
Empty sets may be omitted.  Fix a unitary representation.  The
projections $\proj_{A_i}^{(\abs{A_i})}$ commute.  For each $i$, every set
indexing an $\alpha$-term in the difference of the two sides either contains
$A_i$ or is disjoint from it.  Hence $\proj_{A_i}^{(\abs{A_i})}$ also commutes
with the difference.  Decompose
the representation into their common eigenspaces.  On the summand indexed
by $Z\subseteq[t]$,
$\alpha_{A_i}$ acts as zero for $i\in Z$ and as $\abs{A_i}\one$ for
$i\notin Z$.  On this summand, the difference between the two sides is
\[
 C\sum_{i\in Z}c_i\alpha_{A_i\cup\set{q}}
 -\sum_{\substack{i<j\\i,j\in Z}}c_ic_j\alpha_{A_i\cup A_j}
 +\sum_{\substack{i<j\\i,j\notin Z}}c_ic_j\alpha_{A_i\cup A_j}.
\]
The last sum is positive semidefinite, so it is enough to prove
\[
 C\sum_{i\in Z}c_i\alpha_{A_i\cup\set{q}}
 \succeq
 \sum_{\substack{i<j\\i,j\in Z}}c_ic_j\alpha_{A_i\cup A_j}.
\]
There is nothing to prove if $Z=\varnothing$, so assume that $Z$ is nonempty.

On this summand, Lemma~\ref{lem:hypergraph-projections} gives
$\alpha_{A_i\cup\set{q}}=
\sum_{a\in A_i}\alpha_{\set{a,q}}$ for $i\in Z$.  Assign weight $c_i$
to every $a\in A_i$.  The first two terms in the preceding display have
the following decomposition on the chosen summand:
\[
\begin{aligned}
 &C\sum_{i\in Z}c_i\sum_{a\in A_i}\alpha_{\set{a,q}}
 -\sum_{\substack{i<j\\i,j\in Z}}c_ic_j\alpha_{A_i\cup A_j}\\
 &={}
 \left(C-\sum_{i\in Z}c_i\abs{A_i}\right)
 \sum_{i\in Z}c_i\sum_{a\in A_i}\alpha_{\set{a,q}}\\
 &\quad+
 \left[
 \left(\sum_{i\in Z}c_i\abs{A_i}\right)
 \sum_{i\in Z}c_i\sum_{a\in A_i}\alpha_{\set{a,q}}
 -\sum_{\substack{i<j\\i,j\in Z}}c_ic_j
  \sum_{\substack{a\in A_i\\b\in A_j}}\alpha_{\set{a,b}}
 \right]\\
 &\quad+
 \sum_{\substack{i<j\\i,j\in Z}}c_ic_j
 \left(
  \sum_{\substack{a\in A_i\\b\in A_j}}\alpha_{\set{a,b}}
  -\alpha_{A_i\cup A_j}
 \right).
\end{aligned}
\]
The first term on the right is positive semidefinite.  After restriction to
the chosen summand, the square-bracketed term is positive semidefinite by
\eqref{eq:hypergraph-critical-22}; the terms with both indices in a single
$A_i$ vanish because $\alpha_{\set{a,b}}=0$ on the range of
$\proj_{A_i}^{(\abs{A_i})}$.  In the last line, the same observation and
Lemma~\ref{lem:hypergraph-projections} give
\[
 \sum_{\substack{a\in A_i\\b\in A_j}}\alpha_{\set{a,b}}
 =\sum_{\set{a,b}\subseteq A_i\cup A_j}\alpha_{\set{a,b}}
 \succeq\alpha_{A_i\cup A_j}.
\]
Every term in the decomposition is therefore positive semidefinite, and the
result follows.
\end{proof}

We next give a Pl\"ucker-theoretic proof of their generalised Octopus
inequality for sets of co-size one~\cite[Theorem~1.9]{AlonKozmaPuder}.

\begin{theorem}\label{thm:hypergraph-deletions}
Let $t\geqslant1$, let $q\in[n]$, let
$A_0\subseteq[n]\setminus\set{q}$, and let
$A_1,\ldots,A_t$ be distinct subsets of $A_0$ such that
$\abs{A_0\setminus A_i}=1$.  Let $c_0,\ldots,c_t\geqslant0$, and set
$C=\sum_{i=0}^tc_i\abs{A_i}$.  Then
\[
 C\sum_{j=0}^tc_j
  \bigl(\alpha_{A_j\cup\set{q}}-\alpha_{A_j}\bigr)
 \succeq
 \sum_{i=0}^tc_i^2\alpha_{A_i}
 +\sum_{0\leqslant i<j\leqslant t}c_ic_j
  \bigl(\alpha_{A_0}+\alpha_{A_i\cap A_j}\bigr).
\]
\end{theorem}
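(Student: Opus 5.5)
The plan is to adapt the proof of Theorem~\ref{thm:hypergraph-disjoint}, with one important change: the relevant projections no longer commute with everything. Write $m=\abs{A_0}$ and $A_i=A_0\setminus\set{b_i}$ for $1\leqslant i\leqslant t$, where the $b_i$ are distinct. Then $A_i\cap A_j=A_0\setminus\set{b_i,b_j}$ for $1\leqslant i<j$, and $A_0\cap A_i=A_i$. Put weights on the points of $A_0$ by
\[
 y_a=\sum_{j\colon a\in A_j}c_j \qquad (a\in A_0),
\]
so that $\sum_{a\in A_0}y_a=C$. On the range of $\proj_{A_0}^{(m)}$ every $\alpha$ indexed by a subset of $A_0$ vanishes. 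Lemma~\ref{lem:hypergraph-projections}, applied to each $A_j\subseteq A_0$, then gives
\[
 \alpha_{A_j\cup\set{q}}=\sum_{a\in A_j}\alpha_{\set{a,q}}
\]
on that range. Hence the left-hand side there is $C\sum_a y_a\alpha_{\set{a,q}}$. By \eqref{eq:hypergraph-critical-22} with $X=A_0$, this dominates $\sum_{\set{a,b}\subseteq A_0}y_ay_b\alpha_{\set{a,b}}$, which also vanishes on that range. So the inequality holds, with room to spare, on the trivial-on-$A_0$ part. The real task is to handle the whole representation.

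Since $\alpha_{A_j\cup\set{q}}$ does not commute with $\proj_{A_0}^{(m)}$, I would not decompose the representation. Instead I would prove an operator inequality of the form
\[
 \text{LHS}-\text{RHS}
 =\Bigl[C\sum_a y_a\alpha_{\set{a,q}}-\sum_{\set{a,b}\subseteq A_0}y_ay_b\alpha_{\set{a,b}}\Bigr]+R,
\]
and show that the remainder $R$ is positive semidefinite. The bracketed term is $\succeq0$ by \eqref{eq:hypergraph-critical-22}. To control $R$, I would first compare $\alpha_{A\cup\set{q}}-\alpha_A$ with $\sum_{a\in A}\alpha_{\set{a,q}}$ for a general set $A$. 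Using $\alpha_A=\abs A(\one-\proj_A^{(\abs A)})$ and the coset identity from the proof of Lemma~\ref{lem:hypergraph-projections}, one gets
\[
 \alpha_{A\cup\set{q}}-\alpha_A=\Bigl(\sum_{a\in A}\alpha_{\set{a,q}}\Bigr)\proj_A^{(\abs A)}+\abs A\,\bigl(\one-\proj_A^{(\abs A)}\bigr)+\bigl(\one-\proj_A^{(\abs A)}\bigr)\Bigl(\ldots\Bigr).
\]
I would aim to bound this below by $\sum_{a\in A}\alpha_{\set{a,q}}$ minus an explicit multiple of $\alpha_A$. The deficit terms, multiplied by $Cc_j$, would then be absorbed using the right-hand side terms $c_ic_j(\alpha_{A_0}+\alpha_{A_i\cap A_j})$, together with the second part of Lemma~\ref{lem:hypergraph-projections}, $\sum_{\set{a,b}\subseteq A}\alpha_{\set{a,b}}\succeq\alpha_A$.

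For the quadratic terms I would use $y_ay_b$ for $a,b\in A_0$. Expanding, $\sum_{\set{a,b}}y_ay_b\alpha_{\set{a,b}}$ collects $c_ic_j$ times the pairs of transpositions inside $A_i\cap A_j$, and $c_i^2$ times the pairs inside $A_i$. Lemma~\ref{lem:hypergraph-projections} gives
\[
 \sum_{\set{a,b}\subseteq A_i\cap A_j}\alpha_{\set{a,b}}\succeq\alpha_{A_i\cap A_j}
 \quad\text{and}\quad
 \sum_{\set{a,b}\subseteq A_i}\alpha_{\set{a,b}}\succeq\alpha_{A_i}.
\]
The remaining $\alpha_{A_0}$ contributions would need to come from the cross pairs involving $b_i$ or $b_j$. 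Exploiting the co-size-one structure would be essential here: only one or two points are removed from $A_0$, so few cross pairs are involved.

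The main obstacle I expect is the non-commutativity: making the comparison between $\alpha_{A\cup\set{q}}-\alpha_A$ and $\sum_{a\in A}\alpha_{\set{a,q}}$ precise off the range of $\proj_A^{(\abs A)}$. If a clean operator inequality fails, my fallback is to note that every $\alpha$ appearing is invariant under $\Sn_{A_0\setminus\set{b_1,\ldots,b_t}}$. I would then average over that subgroup, restricting to vectors trivial on the common core $A_0\setminus\set{b_1,\ldots,b_t}$ in each isotypic piece. This reduces the problem to a computation on $\Sn_{\set{b_1,\ldots,b_t,q}}\times(\text{core})$, where the Octopus inequality of Theorem~\ref{thm:octopus} can be applied with weights $c_j$ on the points $b_i$.
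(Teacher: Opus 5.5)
Your main plan does not work: the remainder $R$ that you want to show is positive semidefinite is not positive semidefinite in general. So no bookkeeping with Lemma~\ref{lem:hypergraph-projections} can produce it. Take $A_0=\set{1,2,3}$, $q=4$, $A_1=\set{1,2}$, $c_0=1$ and $c_1=\cdots=c_t=0$ (or small positive values, by continuity). Then $C=3$ and every $y_a=1$. The difference of the two sides is $3\alpha_{A_0\cup\set{q}}-4\alpha_{A_0}=12\bigl(\proj_{A_0}^{(3)}-\proj_{A_0\cup\set{q}}^{(4)}\bigr)$. This vanishes on $S^{(2,2)}$, because $S^{(2,2)}$ restricts to $S^{(2,1)}$ on $\Sn_{A_0}$. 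Your bracket is $3\sum_{a\in A_0}\alpha_{\set{a,q}}-\sum_{\set{a,b}\subseteq A_0}\alpha_{\set{a,b}}=6-3J_4+J_2+J_3$, with $J_k=\sum_{i<k}(i\,k)$. On the Young basis of $S^{(2,2)}$ the contents of $2,3,4$ are $1,-1,0$ or $-1,1,0$. Hence the bracket acts as $6$ and $R$ acts as $-6$. Away from the permutation module, the Octopus operator is simply not dominated by the difference of the two sides.

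Your opening observation does not help either: it only concerns the compression to the range of $\proj_{A_0}^{(m)}$, which is not an invariant subspace. The fallback is also not a proof. The operators do commute with $\Sn_K$, where $K=A_0\setminus\set{b_1,\ldots,b_t}$. But on $\Sn_K$-invariant vectors the transpositions $(a\,q)$ with $a\in K$ still mix the core with the remaining points. So the compressed algebra is a Hecke algebra, not a group algebra of $\Sn_{\set{b_1,\ldots,b_t,q}}\times(\text{core})$, and you do not identify which Octopus instance would close the argument.

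The missing idea is to decompose with respect to $\Sn_{A_0\cup\set{q}}$ itself. Every operator lies in $\C[\Sn_{A_0\cup\set{q}}]$, whose central projections commute with everything, so the non-commutativity you worried about never arises. On the permutation module $\C^{A_0\cup\set{q}}\cong S^{(m+1)}\oplus S^{(m,1)}$ one has $\alpha_B=\sum_{\set{a,b}\subseteq B}\alpha_{\set{a,b}}$ for every $B$. Since the sets $A_0\setminus A_i$ are pairwise disjoint of size at most one, the coefficient of $\alpha_{\set{a,b}}$ on the right-hand side is exactly $y_ay_b$. So the claim on this module is precisely \eqref{eq:hypergraph-critical-22} with your weights.

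On every other $S^\lambda$ (those with $\lambda_1\leqslant m-1$), the branching rule gives $\proj_B^{(\abs B)}=0$ whenever $\abs B\geqslant m$. Thus $\alpha_{A_0}$, $\alpha_{A_0\cup\set{q}}$ and $\alpha_{A_i\cup\set{q}}$ act as the scalars $m$, $m+1$ and $m$. Using $C=mc_0+(m-1)\sum_{i\geqslant1}c_i$, the difference of the two sides becomes
\[
 \sum_{i=1}^t(m-1)c_i(C+c_i+c_0)\proj_{A_i}^{(m-1)}
 +\sum_{1\leqslant i<j\leqslant t}(m-2)c_ic_j\proj_{A_i\cap A_j}^{(m-2)}\succeq0,
\]
with only the first sum when $m=2$. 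So you had the right weights and the right Octopus instance. It is needed only on the permutation module, and on the remaining isotypic components the inequality is elementary.
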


\begin{proof}
Put $m=\abs{A_0}$.  The cases $m\leqslant1$ are immediate, so assume
$m\geqslant2$.  All the operators are supported on
$A_0\cup\set{q}$.

On the permutation module
$\C^{A_0\cup\set{q}}\cong S^{(m+1)}\oplus S^{(m,1)}$, one has, for every
$B\subseteq A_0\cup\set{q}$,
\[
 \alpha_B=\sum_{\set{a,b}\subseteq B}\alpha_{\set{a,b}}.
\]
For $a\in A_0$, give $a$ the weight
$x_a=\sum_{0\leqslant i\leqslant t,\,a\in A_i}c_i$.  Then
$C=\sum_{a\in A_0}x_a$.  The sets $A_0\setminus A_i$, for $0\leqslant i\leqslant t$,
are pairwise disjoint and have cardinality at most one.  Hence, for each
$i<j$, the two $c_ic_j$ terms in the expansion of $x_ax_b$ contribute once,
and contribute twice precisely when $a,b\in A_i\cap A_j$.  It follows that,
for distinct $a,b\in A_0$, the coefficient of $\alpha_{\set{a,b}}$ on the
right-hand side is
\[
 \sum_{\substack{0\leqslant i\leqslant t\\a,b\in A_i}}c_i^2
 +\sum_{0\leqslant i<j\leqslant t}c_ic_j
 +\sum_{\substack{0\leqslant i<j\leqslant t\\
                    a,b\in A_i\cap A_j}}c_ic_j
 =x_ax_b.
\]
The asserted inequality on the permutation representation is therefore
\[
 C\sum_{a\in A_0}x_a\alpha_{\set{a,q}}
 \succeq
 \sum_{\set{a,b}\subseteq A_0}x_ax_b\alpha_{\set{a,b}},
\]
which is precisely \eqref{eq:hypergraph-critical-22}.  This proves the
result on the trivial representation and $S^{(m,1)}$.

It remains to consider $S^\lambda$, where $\lambda\vdash m+1$ and
$\lambda_1\leqslant m-1$.  When $\abs B=m$, the branching rule shows that
$\proj_B^{(m)}$ can be nonzero only for $\lambda=(m+1)$ or $(m,1)$, which
have already been treated.  The assertion for $\abs B=m+1$ is immediate.
Hence $\proj_B^{(\abs B)}$ vanishes on $S^\lambda$ whenever
$\abs B\geqslant m$, and $\alpha_B=\abs B\one$ for every such $B$.
Rearranging the required inequality gives
\[
\begin{aligned}
 &C\left(c_0\alpha_{A_0\cup\set{q}}
       +\sum_{i=1}^tc_i\alpha_{A_i\cup\set{q}}\right)
 -\left(Cc_0+c_0^2+c_0\sum_{i=1}^tc_i
       +\sum_{1\leqslant i<j\leqslant t}c_ic_j\right)\alpha_{A_0}\\
 &\quad\succeq
 \sum_{i=1}^tc_i(C+c_i+c_0)\alpha_{A_i}
 +\sum_{1\leqslant i<j\leqslant t}c_ic_j\alpha_{A_i\cap A_j}.
\end{aligned}
\]
The left-hand side acts by the scalar
\[
 C\left((m+1)c_0+m\sum_{i=1}^tc_i\right)
 -m\left(Cc_0+c_0^2+c_0\sum_{i=1}^tc_i
       +\sum_{1\leqslant i<j\leqslant t}c_ic_j\right).
\]
Using $C=mc_0+(m-1)\sum_{i=1}^tc_i$ and
\[
 \left(\sum_{i=1}^tc_i\right)^2
 =\sum_{i=1}^tc_i^2
  +2\sum_{1\leqslant i<j\leqslant t}c_ic_j,
\]
this scalar is
\[
 (m-1)\sum_{i=1}^tc_i(C+c_i+c_0)
 +(m-2)\sum_{1\leqslant i<j\leqslant t}c_ic_j.
\]
Consequently, if $m\geqslant3$, the difference between the two sides of
the rearranged inequality is exactly
\[
 \sum_{i=1}^t(m-1)c_i(C+c_i+c_0)\proj_{A_i}^{(m-1)}
 +\sum_{1\leqslant i<j\leqslant t}
   (m-2)c_ic_j\proj_{A_i\cap A_j}^{(m-2)}\succeq0.
\]
When $m=2$, the same calculation gives only the first sum.  This
proves the inequality in
$\C[\Sn_{A_0\cup\set{q}}]$.  The standard inclusion
$\C[\Sn_{A_0\cup\set{q}}]\hookrightarrow\C[\Sn_n]$ preserves positivity,
so the result follows in $\C[\Sn_n]$.
\end{proof}

\begin{remark}
The generalised Octopus inequality of Alon, Kozma, and Puder for sets with
large intersection~\cite[Theorem~1.8]{AlonKozmaPuder} concerns sets
$A_i=A_0\cup\set{a_i}$.  The corresponding Young subgroup projections need
not commute, so the common eigenspace reduction used above is no longer
available.  It would be interesting to determine whether a suitable
Pl\"ucker relation, compressed to the corner
\[
 \proj_{A_0}^{(\abs{A_0})}\,
 \C[\Sn_{A_0\cup\set{q,a_1,\ldots,a_t}}]\,
 \proj_{A_0}^{(\abs{A_0})}
\]
yields their inequality.
\end{remark}


\begin{thebibliography}{99}

\bibitem{AldousCaputoDurrettHolroydJungPuha2021}
D.~Aldous, P.~Caputo, R.~Durrett, A.~E.~Holroyd,
P.~Jung, and A.~L.~Puha,
\emph{The life and mathematical legacy of Thomas M. Liggett},
Notices Amer. Math. Soc. \textbf{68} (2021), no.~1,
67--79.

\bibitem{AlonKozma}
G.~Alon and G.~Kozma,
\emph{Comparing with octopi},
Ann. Inst. Henri Poincar\'e Probab. Stat. \textbf{56} (2020), no.~4,
2672--2685.

\bibitem{AlonKozmaPuder}
G.~Alon, G.~Kozma, and D.~Puder,
\emph{On the Aldous--Caputo spectral gap conjecture for hypergraphs},
Math. Proc. Cambridge Philos. Soc. \textbf{179} (2025), 259--298.

\bibitem{Cesi}
F.~Cesi,
\emph{A few remarks on the Octopus inequality and Aldous' spectral gap
conjecture},
Comm. Algebra \textbf{44} (2016), no.~1, 279--302.

\bibitem{Chen}
J.~P. Chen,
\emph{The moving particle lemma for the exclusion process on a weighted
graph},
Electron. Commun. Probab. \textbf{22} (2017), paper no.~47, 13 pp.

\bibitem{CLR}
P.~Caputo, T.~M. Liggett, and T.~Richthammer,
\emph{Proof of Aldous' spectral gap conjecture},
J. Amer. Math. Soc. \textbf{23} (2010), no.~3, 831--851.

\bibitem{FultonHarris}
W.~Fulton and J.~Harris,
\emph{Representation Theory: A First Course},
Graduate Texts in Mathematics, vol.~129,
Springer-Verlag, New York, 1991.

\bibitem{Hora}
A.~Hora,
\emph{The Limit Shape Problem for Ensembles of Young Diagrams},
SpringerBriefs in Mathematical Physics, vol.~17,
Springer, Tokyo, 2016.

\bibitem{KarpPurbhoo}
S.~N. Karp and K.~Purbhoo,
\emph{Universal Pl\"ucker coordinates for the Wronski map and positivity
in real Schubert calculus},
J. Amer. Math. Soc., to appear;
\href{https://arxiv.org/abs/2309.04645}{arXiv:2309.04645v2}.

\bibitem{Sagan}
B.~E. Sagan,
\emph{The Symmetric Group: Representations, Combinatorial Algorithms,
and Symmetric Functions},
second ed., Graduate Texts in Mathematics, vol.~203,
Springer, New York, 2001.

\end{thebibliography}
\end{document}